\newtheorem{Thm}{Theorem}
\newtheorem{Conj}[Thm]{Conjecture}
\newtheorem{Prop}[Thm]{Proposition}
\newtheorem{Def}[Thm]{Definition}
\newtheorem{Def/Thm}[Thm]{Definition/Theorem}
\newtheorem{Cor}[Thm]{Corollary}
\newtheorem{Lemma}[Thm]{Lemma}
\theoremstyle{remark}
\newcommand{\ti }{\times}
\newcommand{\ot }{\otimes}
\newcommand{\NN}{{\mathbb N}}
\newcommand{\PP }{{\mathbb P}}
\newcommand{\QQ }{{\mathbb Q}}
\newcommand{\CC }{{\mathbb C}}
\newcommand{\ZZ }{{\mathbb Z}}
\newcommand{\lan}{\langle}
\newcommand{\ran}{\rangle}
\begin{document}

\title[Crepant resolution conjecture for $\CC^5/\ZZ_5$]
{Crepant resolution conjecture for $\CC^5/\ZZ_5$}

\author{Hyenho Lho}
\address{Department of Mathematics, ETH Z\"urich}
\email {hyenho.lho@math.ethz.ch}

\begin{abstract} 
We study the relationship between Gromov-Witten invariants of local $\PP^4$ and Gromov-witten invariants of $[\CC^5/\ZZ_5]$ for all genera. We state the crepant resolution conjecture in explicit form and prove this conjecture for $g=2,3.$

 \end{abstract}

\maketitle

\setcounter{tocdepth}{1} 
\tableofcontents

\setcounter{section}{-1}

\section{Introduction}

\subsection{Crepant resolution conjecture} Let $\mathcal{A}$ be an algebraic orbifold and denote by $A$  the coarse moduli space of $\mathcal{A}$. Let 
$$B\rightarrow A$$
be a crepant resolution. Then crepant resolution conjecture state the relationship between Gromov-Witten theory of $\mathcal{A}$ and $B$. The conjecture was verified in many cases in genus $0$, see \cite{BG,BT,Co,CIT}. For higher genus, one can also state the crepant resolution conjecture using Givental's quantization formalism, see \cite{CR}.

The total space $K\PP^4$ of canonical bundle over $\PP^4$ is well-known to be a crepant resolution of the quotient,
\begin{align*}
    \epsilon_5 : K\PP^4 \rightarrow \CC^5/\ZZ_5.
\end{align*}

In this paper, we study the relationship between Gromove-Witten theory of $K\PP^4$ and $[\CC^5/\ZZ_5]$ for all genera. In our case, crepant resolution conjecture is stated in very simple and explicit form, see Conjecture \ref{Conj1}. Using Givental-Teleman's classification theorem and Tseng's orbifold quantum Riemann-Roch theorem, we can reduce the crepant resolution conjecture to very explicit equations related to Picard-Fuchs equation of local $\PP^4$ and Bernoulli polymonials, see Proposition \ref{Prop1}.  
It is interesting question to find out how our conjecture here is related to the version with Givental's quantization formalism. 

Our conjecture can be stated for the situation
$$\epsilon_n : K\PP^{n-1} \rightarrow [\CC^n/\ZZ_n]$$
for all $n \in \NN$. The case $n=2$ was studied in \cite{LP} and \cite{LP2}. The case in our paper is $n=5$. The case $n=5$ is especially interesting because the Gromov-Witten theory of $K\PP^4$ and quintic threefold is closely related. For example, Gromov-Witten potential functions of both theories are expected to lie in the same ring up to some small modifications of sign. Furthermore one might hope to formulate similar conjecture for quintic threefold. We will come back to this problems in the future.

\subsection{Gromov-Witten theory for local $\PP^4$} Gromovw-Witten theory of $K\PP^4$ can be studied by twisted theories associated to $\PP^4$ as follows. Let the algebraic tors
$$\mathsf{T}=(\CC^*)^5$$
act with the standard linearization on $\PP^4$ with weights $\lambda_0,\dots,\lambda_4$ on the vector space $H^0(\PP^4,\mathcal{O}_{\PP^4}(1))$.

Let $\overline{M}_g(\PP^n,d)$ be the moduli space of stable maps to $\PP^4$  equipped with the canonical $\mathsf{T}$-action, and let
\begin{align*}
    \mathsf{C} \rightarrow \overline{M}_g(\PP^4,d), \, f : \mathsf{C} \rightarrow \PP^4, \, \mathsf{S}=f^*\mathcal{O}_{\PP^4}(-1)\rightarrow \mathsf{C}
\end{align*}
be the standard univeral structures.
The Gromov-Witten invariants of the twisted geometry of $\PP^4$ are defined via the equivariant integrals

\begin{align}\label{twt}
 \lan \gamma_1,\dots,\gamma_n \ran^{K\PP^4}_{g,n,d} = \int_{[\overline{M}_g(\PP^4,d)]^{vir}}
e(-R\pi_*\mathsf{S}^{5}) \prod_{i=1}^n \text{ev}_i^*(\gamma_i)
\, .
\end{align}
The integral \eqref{twt} is homogenenous of degree $0$ in localized equivariant cohomology and defines a rational number
$$\lan \gamma_1,\dots,\gamma_n \ran^{K\PP^4}_{g,n,d} \in \QQ $$
after the specialization
\begin{align}\label{Specialization}
\lambda_i=\zeta^i
\end{align}
for a primitive fifth root of unity $\zeta^5=1.$ Throughout the paper we always consider $\mathsf{T}$-equivariant theory after specialization \eqref{Specialization}.
We define series for the local $K\PP^4$ geometry,
\begin{align*}
    \lan\lan \gamma_1,\dots,\gamma_m\ran\ran^{K\PP^4}_{g,m}:=\sum_{d=0}^\infty \frac{t^d}{d!}\lan\gamma_1,\dots,\gamma_m\ran^{K\PP^4}_{g,m,d}\,.
\end{align*}

\subsection{Gromov-Witten theory for $[\CC^5/\ZZ_5]$} The inertia stack $I[\CC^5/\ZZ_5]$ has five components corresponding to the five element $\{1,\omega,\omega^2,\omega^3,\omega^4\}$ of $\ZZ_5$. Since each component is contractable, the graded vector space

\begin{align*}
    H^*_{\text{orb}}([\CC^5/\ZZ_5])=H^*(I[\CC^5/\ZZ_5])
\end{align*}
has a canonical basis $\{\phi_0,\phi_1,\phi_2,\phi_3,\phi_4\}$ corresponding to the five components.

Let the algebraic torus 
$$\mathsf{T}$$
act standardly with wights $\lambda_0,\dots,\lambda_4$ on $\CC^5$.

Let $\overline{M}_{g,m}^{orb}([\CC^5/\ZZ_5])$ be the moduli space of orbifold stable maps to $[\CC^5/\ZZ_5]$ equipped with the canonical $\mathsf{T}$-actions. For $n=n_0+n_1+\dots+n_4$, the Gromov-Witten invariants of $[\CC^5/\ZZ_5]$ are defined via the equivariant integrals
\begin{align*}
 &\lan \phi_0^{n_0},\phi_1^{n_1},\dots,\phi_4^{n_4}\ran^{[\CC^5/\ZZ_5]}_{g,n}= \\ 
 &\int_{[\overline{M}^{orb}_{g,n}([\CC^5/\ZZ_5])]^{vir}}\prod_{i=1}^{n_0}\text{ev}_i^*(\phi_0)\prod_{i=n_0+1}^{n_0+n_1}\text{ev}_i^*(\phi_1)\prod_{i=n_0+n_1+1}^{n_0+n_1+n_2}\text{ev}_i^*(\phi_2)\\
 &\prod_{i=n_0+n_1+n_2+1}^{n_0+n_1+n_2+n_3}\text{ev}_i^*(\phi_3)\prod_{i=n_0+n_1+n_2+n_3+1}^{n_0+n_1+n_2+n_3+n_4}\text{ev}_i^*(\phi_4).
\end{align*}

\noindent The above integral defines a rational number
$$\lan \phi_0^{n_0},\phi_1^{n_1},\dots,\phi_4^{n_4}\ran^{[\CC^5/\ZZ_5]}_{g,n} \in \QQ$$
after the specialization
\begin{align}\label{Specialization2}
    \lambda_i=\zeta^i. 
\end{align}
    Here also we consider $\mathsf{T}$-equivariant theory after the specialization \eqref{Specialization2}. We define series for the $[\CC^5/\ZZ_5]$ geometry,
\begin{align*}
    \lan\lan \gamma_1,\dots,\gamma_m \ran\ran^{[\CC^5/\ZZ_5]}_{g,m}:=\sum_{k=0}^\infty \frac{s^k}{k!}\lan \gamma_1,\dots,\gamma_m,\phi_1,\dots,\phi_k\ran^{[\CC^5/\ZZ_5]}_{g,m+k}.
\end{align*}

\subsection{Acknowledgments} 
I am very grateful to H.~Fan, B.~Kim, R.~Pandharipande, J.~Shen
for discussions 
about Gromov-Witten theory and crepant resolution conjecture. I was supported by the grant ERC-2012-AdG-320368-MCSK.

\

\section{Cohomological field theory}

\subsection{Definitions.} The notion of a cohomological field theory (CohFT) was introduced in \cite{KM,Ma}. We review the treatment of \cite{PPZ}.

Let $\mathsf{A}$ be a commutative $\CC$-algebra. Let $\mathsf{V}$ be a free $\mathsf{A}$-module of finite rank, let
$$ \eta : \mathsf{V}\otimes\mathsf{V}\rightarrow \mathsf{A}$$
be an non-degenerate symmetric pairing, and let $\mathsf{1}\in\mathsf{V}$ be a distinguished element. Let $\{e_i\}$ be a basis of $\mathsf{V}$ and denote by
$$\eta_{jk}=\eta(e_j,e_k)$$
the matrix of symmetric form. The inverse matix is denoted by $\eta^{jk}$.

A cohomological field theory consists of a data $\Omega=(\Omega_{g,r})_{2g-2+r>0}$ of elements
$$\Omega_{g,r}\in H^*(\overline{\mathcal{M}}_{g,r},\mathsf{A}) \otimes (\mathsf{V}^*)^{\otimes r}$$

We view $\Omega_{g,r}$ as associating a cohomology class on $\overline{\mathcal{M}}_{g,r}$ to elements of $\mathsf{V}$ assigned to the $r$ markings. $\Omega$ satisfy the following CohFT axioms.
\begin{enumerate}
 \item[(i)] Each $\Omega_{g,n}$ is $S_r$-invariant, where the action of the symmetric group $S_r$ permutes both the marked points of $\overline{M}_{g,r}$ and the copies of $\mathsf{V}^*$.
 \item[(ii)] Let $q$ and $\tilde{q}$ be the canonical gluing maps:
 \begin{align*}
     q &: \overline{M}_{g-1,r+2}\rightarrow \overline{M}_{g,r}\,, \\
     \tilde{q} &: \overline{M}_{g_1,r_1+1}\times\overline{M}_{g_2,r_2+1}\rightarrow \overline{M}_{r,g}\,.
 \end{align*}
The pull-backs $q^*(\Omega_{g,r})$ and $\tilde{q}^*(\Omega_{g,r})$ are equal to the contractions of 
$$\Omega_{g-1,r+2}\, \text{and}\, \Omega_{g_1,r_1+1}\otimes\Omega_{g_2,r-2+1} $$
by the bi-vector
$$\sum_{j,k} \eta^{jk} e_j\otimes e_k$$
inseted at the two glued points.
\item[(iii)] Let $p$ be the canonical map which forget the last marking:
$$p : \overline{M}_{g,r+1}\rightarrow \overline{M}_{g,r}.$$
For $v_1,\dots,v_r \in\mathsf{V}$ we require
\begin{align*}
    \Omega_{g,r+1}(v_1\ot\dots\ot v_r\ot \mathsf{1})&=p^*\Omega_{g,r}(v_1\ot\dots\ot v_r)\, ,\\
    \Omega_{0,3}(v_1\ot v_2\ot\mathsf{1})&=\eta(v_1,v_2) \, .
\end{align*}
\end{enumerate}

\begin{Def}
A data $\Omega=(\Omega_{g,r})_{2g-2+r>0}$ of elements
$$\Omega_{g,r}\in H^*(\overline{M}_{g,r},\mathsf{A})\ot (\mathsf{V}^*)^{\ot r}$$
satisfying properiest (i),(ii) and (iii) is called a cohomological field thoery with unit (CohFT).
\end{Def}

A CohFT $\omega$ composed only of degree $0$ classes,
$$\omega\in H^0(\overline{\mathcal{M}}_{g,r},\mathsf{A})\ot(\mathsf{V}^*)^{\ot r},$$
is called a {\em topological field theory}. By propety (ii), a topologycial field thoery is uniquely determined by the associated quantum product.

A CohFT $\Omega$ yields a quantum product $\bullet$ on $\mathsf{V}$ by following.
\begin{align}\label{QP}
    \eta(v_1\bullet v_2,v_3)=\Omega_{0,3}(v_1\ot v_2\ot v_3).
\end{align}
By (ii), $\bullet$ is associative. By (iii), the distinguished element $\mathsf{1}$ is the identity for $\bullet$. 

\subsection{Semisimplicity.}
\subsubsection{Classification.} Let $\Omega$ be a CohFT with respect to $(\mathsf{V},\eta,\mathsf{1})$. We call $\Omega$  semisimple, if the associated quantum product $\bullet$ defined by \eqref{QP} is semisimple. 

We review here the classification of semisiple CohFT, see \cite{Te, SS}.
Let $\mathsf{R}$ be the End$(\mathsf{V})$-valued power series 
$$\mathsf{R}(z)=1+R_1 z+R_2 z^2+\dots \in \mathsf{Id}+z \text{End}(\mathsf{V})[[z]],$$
satisfying the symplectirc condition,
$$\mathsf{R}(z)\mathsf{R}^*(-z)=\mathsf{Id}\, , $$
where $\mathsf{R}^*$ is the adjoint with respect to $\eta$ and $\mathsf{Id}$ is the identity matrix.

\subsubsection{Actions on CohFTs.} Let $\Omega=(\Omega_{g,r})$ be a CohFT with respect to ($\mathsf{V},\eta,\mathsf{1}$). Fix a symplectic matrix
$$\mathsf{R}(z)\mathsf{R}^*(-z)=\mathsf{Id} $$
as above. A new CohFT with respect to $(\mathsf{V},\eta,\mathsf{1})$ is obtained via the cohomology elements
$$ \mathsf{R}\Omega=(\mathsf{R}\Omega)_{g,r} \, ,$$
defined as sums over stable graph $\Gamma$ of genus $g$ with $r$ legs, with contributions coming from vertices, legs and edges.
\begin{align*}
    (\mathsf{R}\Omega)_{g,r}=\sum_{\Gamma}\frac{1}{\text{Aut}(\Gamma)}(\iota_\Gamma)_*\left(\prod_v \text{Cont}(v) \prod_e \text{Cont}(e) \prod_l \text{Cont}(l) \right)
\end{align*}
where:
\begin{enumerate}
 \item[(i)] the vertex contribution is
 $$\text{Cont}(v)=\Omega_{g(v),r(v)} $$
 with $g(v)$ and $r(v)$ denoting the genus and number of half-edges and legs of the vertex,
 \item[(ii)] the leg contribution is 
 $$\text{Cont}(l)=\mathsf{R}(\psi_l), $$
 where $\psi_l$ is the cotangent class at the marking corresponding to the leg,
 \item[(iii)] the edge contribution is
 $$\text{Cont}(e)=\frac{\eta^{-1}-\mathsf{R}(\psi'_e)\eta^{-1}\mathsf{R}(\psi''_e)^T}{\psi'_e+\psi''_e},$$
 where $\psi'_e$ and $\psi''_e$ are the cotangent classes at the node which represents the edge $e$. The edge contribution is well-defined by the symplectic condition.
\end{enumerate}

A second action on CohFTs is given by translations. Let $\Omega$ be a CohFT with respect to $(\mathsf{V},\eta,\mathsf{1})$ as before and let 
$$\mathsf{T}(z)=T_2 z^2+T_3 z^3+\dots$$
be a $\mathsf{V}$-valued power series with vanishing coefficients in degree $0$ and $1$.
Define a new CohFT $\mathsf{T}\Omega$ with respect to $(\mathsf{V},\eta,\mathsf{1})$ by following.
\begin{align*}
    (\mathsf{T}\Omega)_{g,r}(v_1\ot\dots\ot v_r)=\sum_{m=0}^{\infty}\frac{1}{m!}(p_m)_*\Omega_{g,r+m}(v_1\ot\dots\ot v_r\ot \mathsf{T}(\psi_{r+1})\ot\dots\ot\mathsf{T}(\psi_{r+m}))
\end{align*}
where
$$p_m : \overline{\mathcal{M}}_{g,r+m}\rightarrow\overline{\mathcal{M}}_{g,r}$$
is the forgetful morphism.

\subsubsection{Reconstruction.} With the above settings, we can state the Givental-Teleman classification theorem \cite{Te,SS}. Fix a semisimple CohFT $\Omega$ with respect to $(\mathsf{V},\eta,\mathsf{1})$ and denote by $\omega$ the degree $0$ topological part of $\Omega$. Given a symplectic matrix
$$\mathsf{R} \in \mathsf{Id}+z \text{End}(\mathsf{V})[[z]],$$
we define a power seies $\mathsf{T}$ as following:
$$\mathsf{T}(z)=z(\mathsf{Id}-\mathsf{R}(\mathsf{1}))\in\mathsf{V}[[z]].$$

\begin{Thm}(\cite{Te}, Lemma 2.2)\label{Classification} There exists a unique symplectic matrix $\mathsf{R}$ for which
$$\Omega=\mathsf{RT}\omega.$$

\end{Thm}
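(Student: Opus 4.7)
The plan is to prove this by reducing to a canonical form on the semisimple Frobenius algebra and then building $\mathsf{R}$ one coefficient at a time. First I would pass to the idempotent basis $\{e_1,\dots,e_n\}$ of $(\mathsf{V},\bullet)$ guaranteed by semisimplicity, in which the quantum product is diagonal and $\omega$ becomes a direct sum of one-dimensional topological field theories. In this basis, the graph sum defining $\mathsf{R}\mathsf{T}\omega$ becomes tractable: each vertex contribution is determined by the normalized canonical coordinates of the diagonal TQFT, while the edge and leg contributions package the data of $\mathsf{R}$ into polynomials in the $\psi$-classes.

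Next I would determine $\mathsf{R}(z)$ order by order in $z$. Writing $\mathsf{R}(z)=\mathsf{Id}+R_1 z+R_2 z^2+\dots$, the identity $\Omega_{g,r}=(\mathsf{R}\mathsf{T}\omega)_{g,r}$ restricted to the open locus $\mathcal{M}_{g,r}\subset\overline{\mathcal{M}}_{g,r}$ kills all edge contributions (which are supported on the boundary) and reduces, at each order of $\psi$, to a linear equation whose unknown is the next coefficient $R_k$. I would verify that this equation has a unique solution by using semisimplicity to diagonalize the leading term. The symplectic condition $\mathsf{R}(z)\mathsf{R}^*(-z)=\mathsf{Id}$ is then automatic from the pairing-invariance of $\Omega$, and the translation $\mathsf{T}(z)=z(\mathsf{Id}-\mathsf{R}(\mathsf{1}))$ is forced by the unit axiom (iii).

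The main obstacle is showing that the $\mathsf{R}$ determined on the open locus actually reproduces $\Omega$ on all of $\overline{\mathcal{M}}_{g,r}$, including the boundary. This is the deep step in Teleman's argument, and it requires the Getzler--Ionel vanishing theorem (equivalently the Madsen--Weiss/Mumford stability statements): in sufficiently high tautological degree, the cohomology of $\overline{\mathcal{M}}_{g,r}$ is generated by classes pushed forward from the boundary. Combined with the gluing axiom (ii), this propagates the matching on the open stratum, and the matching for lower $(g',r')$ with $2g'-2+r'<2g-2+r$, to the full class on $\overline{\mathcal{M}}_{g,r}$. I would run this as a double induction on $2g-2+r$ and on the $\psi$-degree, using the already-constructed lower-order pieces of $\mathsf{R}$ to evaluate the edge contributions at each stage.

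Finally, uniqueness is essentially built into the construction: because the open-locus restriction at order $z^k$ already pins down $R_k$, any two symplectic matrices $\mathsf{R}^{(1)},\mathsf{R}^{(2)}$ satisfying $\Omega=\mathsf{R}^{(i)}\mathsf{T}^{(i)}\omega$ (with $\mathsf{T}^{(i)}$ defined from $\mathsf{R}^{(i)}$ by the stated formula) must agree order by order, hence $\mathsf{R}^{(1)}=\mathsf{R}^{(2)}$. I expect the Getzler--Ionel input to be the sole non-formal ingredient; the rest of the argument is a careful but mechanical unraveling of the $\mathsf{R}$- and $\mathsf{T}$-actions against the CohFT axioms.
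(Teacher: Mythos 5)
The paper does not prove this statement: it is quoted verbatim from Teleman (cited as \cite{Te}, Lemma 2.2) and used as a black box, so there is no internal proof to compare yours against. Judged against Teleman's actual argument, your sketch has the right overall shape --- pass to the idempotent basis where $\omega$ splits into rank-one TQFTs, determine $\mathsf{R}$ coefficient by coefficient, invoke a non-formal topological input to control what happens on $\overline{\mathcal{M}}_{g,r}$, and get uniqueness from the order-by-order construction --- but the key input is misidentified and the crucial step is elided.

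Concretely: you assert that the Getzler--Ionel vanishing is ``equivalently'' the Madsen--Weiss/Mumford stability statement, and you paraphrase both as saying that in high tautological degree the cohomology of $\overline{\mathcal{M}}_{g,r}$ is generated by boundary pushforwards. These are two genuinely different theorems, and neither says that. Getzler--Ionel concerns tautological classes of degree at least $g$ being supported on the boundary; Madsen--Weiss computes the \emph{stable} cohomology of $\mathcal{M}_g$ as $g\to\infty$. What Teleman actually uses is the latter, together with Harer stability: the classes $\Omega_{g,r}$ are not a priori tautological, so one cannot simply quote a boundary-support statement for tautological classes, and this is exactly why the homotopy-theoretic input is needed. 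Relatedly, your claim that restricting $\Omega_{g,r}=(\mathsf{R}\mathsf{T}\omega)_{g,r}$ to the open locus $\mathcal{M}_{g,r}$ yields, for each fixed $(g,r)$, a linear equation that uniquely determines $R_k$ is too strong: for small $(g,r)$ the relevant $\kappa$--$\psi$ monomials satisfy relations or vanish outright, and the determination only works after stabilizing to large genus (where Harer stability gives independence) and then descending to low genus via the gluing axiom. That stabilization is the heart of the proof and is missing from your outline. Finally, the assertion that the symplectic condition on $\mathsf{R}$ is ``automatic'' needs an argument (it follows from compatibility of the edge contribution with the pairing at a separating node), though this is a minor point compared with the other two.
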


\

\section{$\mathsf{R}$-matrix}
We review here the properties of $\mathsf{R}$-matrix in Theorem \ref{Classification} closely following the treatment of \cite{SS,Book}.

\subsubsection{Frobenious manifold.}
\begin{Def}
 A Frobenius manifold $\mathsf{V}$ is a quadruple $(\eta,\bullet, \mathsf{A}, \mathbf{1})$ satifying the following conditions:

\begin{enumerate}
    \item $\eta$ is Riemmanian metric on  $\mathsf{M}$,
    \item $\bullet$ is commutative and associative product on $T\mathsf{M}$,
    \item $\mathsf{A}$ is a symmetric tensor,
     $$A:T\mathsf{M} \otimes T\mathsf{M} \otimes T\mathsf{M} \rightarrow \mathcal{O}_{\mathsf{M}}  $$,
    \item $\eta(X\bullet Y, Z)=A(X,Y,Z)$,
    \item $\mathbf{1}$ is a $\eta$-flat unit vector field.
\end{enumerate}
\end{Def}

\noindent For every CohFT $\mathsf{\Omega}$ with respect to $(\mathsf{V},\eta,\mathsf{1})$, the genus $0$ part of $\Omega$ naturally determines a formal Frobenius manifold structure at origin of $\mathsf{V}$.

\subsubsection{Flat coordinates}

Let $p$ be a poin of $\mathsf{M}$. As $\eta$ is flat, we can find flat coordinates $(t^0,t^1,\dots,t^{m-1})$ in a neighborhood of $p$. Denote by
$$\phi_i=\frac{\partial}{\partial t^i}$$
the corresponding flat vector fields. The convention,
$$ \mathsf{1}= \phi_0$$
will usually be followed. Let $\eta_{ij}=\eta(\phi_i,\phi_j)$, and let $\eta^{ij}$ denote the inverse matrix. By flatness, $\eta_{ij}, \eta^{ij}$ are constant matrices.

\subsubsection{Semisimple points and canonical coordinates}

A point $p\in \mathsf{M}$ is called semisimple if the tangent algebra $(T\mathsf{M},\bullet)$ is a semisimple algebra. For semisimple point $p$, we can find canonical coordinate $(u^0,u^2,\dots,u^{m-1})$ in a neighborhood of p. Denote by 
$$e_i=\frac{\partial}{\partial u^i} $$
the corresponding vector fields.
They satisfy the followings.
$$ e_i \bullet e_j = \delta _{ij} e_i.$$

\noindent Define normalized canonical basis $\tilde{e}_i$ by
$$ \tilde{e}_i = \eta(e_i,e_i)^{-\frac{1}{2}} e_i. $$

\subsubsection{The transition matrix}Let $\mathsf{\Psi}$ be the transition matrix from the basis $\phi_i$ to $\tilde{e}_\alpha$.
By the orthonormality of $\tilde{e}_\alpha$, the elements of $\mathsf{\Psi}$ are
$$\mathsf{\Psi}_{\alpha i}=g(\tilde{e}_\alpha,\phi_i).$$

\subsubsection{Fundamental solutions and $\mathsf{R}$-matrix}

We define $$\mathsf{R}(z)=\sum_{k=0}^{\infty}\mathsf{R}_k z^k$$ by following flatness euation,

\begin{align}\label{FSM}
    zd\mathsf{R}^{-1}\mathsf{\Psi}+z\mathsf{R}^{-1}d\mathsf{\Psi}-d\mathsf{U}\mathsf{R}^{-1}\mathsf{\Psi}+\mathsf{R}^{-1}d\mathsf{U}\mathsf{\Psi}.
\end{align}
where $\mathsf{U}=\text{Diag}(u^0,u^1,\dots,u^{m-1})$.

$\mathsf{R}(z)$ is uniquely determined by \eqref{FSM} up to a right multiplication by a constant matrix

$$ \text{exp}(\sum_{k \ge 1} \mathsf{a}_{2k-1}z^{2k-1})$$
where
$$\mathsf{a}_{2k-1}=\text{diag}[a^0_{0,2k-1},a^1_{1,2k-1},\dots,a^{m-1}_{m-1,2k-1}] $$
are constant diagonal matrix.

\

\section{Basic hypergeometric series}\label{BHS}
We here introduce some hypergeometric series related to Gromov-Witten thoery of local $\PP^4$ and $[\CC^5/\ZZ_5]$. 

\subsubsection{I-function}
Define I-fucntion for local $\PP^4$ and $[\CC^5/\ZZ^5]$ by following:
\begin{align*}
    I^{K\PP^4}(q,z):= \sum_{d} \frac{\prod_{k=0}^{5d-1}(-5H-kz)}{\prod_{k=1}^d(H+kz)^5}q^d\, ,\\
    I^{[\CC^5/\ZZ_5]}:=\sum_{a \ge 0} \frac{\psi^a}{z^a a!} \prod_{\substack{0 \le k < \frac{a}{5}\\ \lan k \ran =\lan \frac{a}{5}\ran}} (1-(kz)^5) \phi_a\, .
\end{align*}
We define the mirror map $t(q)$ and $s(\psi)$ for each theory as following equations.

\begin{align*}
    I^{K\PP^4}(q,z)=1+\frac{t(q)}{z}+\mathsf{O}(\frac{1}{z^2}) \, , \\
    I^{[\CC^5/\ZZ_5]}(\psi,z)=1+\frac{s(\psi)}{z}+\mathsf{O}(\frac{1}{z^2}) \, .
\end{align*}
Define $L^{K\PP^4}(q)$ and $L^{[\CC^5/\ZZ_5]}(\psi)$ by the followings.
\begin{align*}
    L^{K\PP^4}(q)=(1+5^5 q)^{-\frac{1}{5}} \, ,\\
    L^{[\CC^5/\ZZ_5]}(\psi)=-\psi(1+\frac{\psi^5}{5^5})^{-\frac{1}{5}}\, .
\end{align*}

In the below, we use $x$ to denote $q$ or $\psi$ depending on the context.
For any $F(x,z)\in \CC[[x,z^{-1}]]$, define

\begin{align*}
    D^\bullet=\left\{ \begin{array}{rl} q\frac{\partial}{\partial q} & \text{if } \bullet = K\PP^4  \\
                                    \psi\frac{\partial}{\partial \psi} & \text{if } \bullet = [\CC^5/\ZZ_5]  \, . \end{array}\right.
\end{align*}
Denote by $\mathsf{M}^\bullet$ the Birkhoff factorization operator defined by:
\begin{align*}
    \mathsf{M}^{\bullet} F(x,z):= z D^\bullet \frac{F(x,z)}{F(x,\infty)}.
\end{align*}
Define power series $\mathsf{F}^\bullet_i(x,z)$ and $C^\bullet_i(x)$ by
\begin{align*}
    \mathsf{F}^{\bullet}_i(x,z)=\mathsf{M}^i I^\bullet(x,z) \,, \, \, C^{\bullet}_i(x)=\mathsf{F}^{\bullet}_i(x,\infty)\, .
\end{align*}
The following relations were proven in \cite{ZaZi}.
\begin{Prop} The power series $C^\bullet_i(x)$ satisfy following equations.
 \begin{align*}
 &C^\bullet_0=1 \,,\,\,\, C^\bullet_2=C^\bullet_4 \,,\,\,\, {C^\bullet_1}^2 {C^\bullet_2}^2 C^\bullet_3 =(-1)^{\delta(\bullet)} {L^\bullet}^5\,,
 \end{align*}
where $$\delta(\bullet) = \left\{ \begin{array}{rl} 0 & \text{if } \bullet = K\PP^4  \\
                                    1 & \text{if } \bullet = [\CC^5/\ZZ_5]  \, . \end{array}\right.$$
\end{Prop}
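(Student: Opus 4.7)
The plan is to derive the Picard--Fuchs equation annihilating each I-function and then iteratively expand the Birkhoff iterates $\mathsf{F}_i^\bullet$, reading off $C_i^\bullet$ as their leading coefficients in $z$. All three identities then follow from the structure of the Picard--Fuchs equation combined with a few explicit algebraic manipulations.

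The first step is to establish the Picard--Fuchs equation. For local $\PP^4$, direct manipulation of the ratio $c_d/c_{d-1}$ of hypergeometric coefficients yields
\begin{equation*}
\left[(H+zD)^5 + q\prod_{k=0}^{4}(5H + 5zD + kz)\right] I^{K\PP^4}(q, z) = 0,
\end{equation*}
and a parallel derivation works for $I^{[\CC^5/\ZZ_5]}$. Since $H^5 = 0$ on $\PP^4$ (respectively, the orbifold multiplication rules on $H^*_{\mathrm{orb}}([\CC^5/\ZZ_5])$ cycle the classes $\phi_a$), these descend to order-$5$ ODEs in $D^\bullet$ on each Fourier component. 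The identity $C_0^\bullet = 1$ is then immediate: the $d \geq 1$ summands of $I^{K\PP^4}$ have leading $z$-behavior $O(1/z)$, so only the $d=0$ term contributes at $z = \infty$; an identical vanishing argument handles each $\phi_a$ component of $I^{[\CC^5/\ZZ_5]}$ separately.

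For the remaining two identities, one expands $\mathsf{F}_i^\bullet$ as a power series in $1/z$ using the Picard--Fuchs equation, and reads off $C_i^\bullet$ as the $z^0$ coefficient. The palindromic symmetry $C_2^\bullet = C_4^\bullet$ comes from a symmetry of the Picard--Fuchs operator (of the form $D \mapsto -D - c$ for an appropriate shift $c$, combined with multiplication by a power of $L^\bullet$) which interchanges $\mathsf{F}_4^\bullet$ with a reflected iterate whose leading coefficient equals $C_2^\bullet$. The product identity $(C_1^\bullet)^2 (C_2^\bullet)^2 C_3^\bullet = (-1)^{\delta(\bullet)} (L^\bullet)^5$ reflects a Wronskian-type relation: the explicit form of the Picard--Fuchs operator determines the Wronskian of its fundamental solutions (up to sign) as a specific power of $L^\bullet$, and expressing this Wronskian through the Birkhoff iterates $\mathsf{F}_i^\bullet$---then using $C_0^\bullet = 1$ and $C_2^\bullet = C_4^\bullet$ to simplify---collapses the identity to the claimed form. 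The sign $(-1)^{\delta(\bullet)}$ tracks the opposite sign conventions in the hypergeometric numerators of the two I-functions.

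The main obstacle is verifying the palindromic symmetry and pinning down the precise Wronskian scaling: both amount to nontrivial algebraic identities for iterated $(zD)$-derivatives of hypergeometric coefficients. Since the proposition is attributed to \cite{ZaZi}, the expected strategy is to follow their approach, applying a substitution that brings the Picard--Fuchs operator into its formal adjoint form and tracking the induced action on the Birkhoff iterates; the symmetry and product identity then emerge essentially by inspection, and the proposition follows by elementary manipulation.
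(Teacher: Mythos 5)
The paper does not actually prove this proposition: it is stated with the single sentence ``The following relations were proven in \cite{ZaZi}'', so there is no internal argument to compare against. Your outline does follow the method of that cited source (Picard--Fuchs equation, a reflection symmetry of the operator for the palindromic identity, a Wronskian computation for the product), and your argument for $C^\bullet_0=1$ is complete and correct: every $d\ge 1$ (resp.\ $a\ge 1$) summand of the $I$-function is $O(1/z)$ because the numerator always contains the $z$-free factor $-5H$ (resp.\ because the product over $k$ has strictly fewer than $a$ powers of $z$, the $k=0$ factor being $1$), so $\mathsf{F}^\bullet_0(x,\infty)=1$.

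For the two substantive identities, however, the proposal defers exactly the content of the proposition, and the one concrete reduction you do offer does not close. The Wronskian of the order-$5$ Picard--Fuchs operator controls $\prod_{i=0}^{4}C^\bullet_i$, and substituting $C^\bullet_0=1$ and $C^\bullet_2=C^\bullet_4$ into that product yields $C^\bullet_1\,(C^\bullet_2)^2\,C^\bullet_3$, which differs by a factor of $C^\bullet_1$ from the claimed $(C^\bullet_1)^2(C^\bullet_2)^2C^\bullet_3$. So the identity does not ``collapse to the claimed form'' from $C^\bullet_0=1$ and $C^\bullet_2=C^\bullet_4$ alone; one needs an additional structural fact about how the Birkhoff iteration closes up after five steps (in effect $C^\bullet_5=C^\bullet_1$, so that the relevant product is $C^\bullet_1C^\bullet_2C^\bullet_3C^\bullet_4C^\bullet_5$). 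Similarly, the reflection $p\mapsto n-1-p$ familiar from the closed (quintic) case of \cite{ZaZi} would pair $C^\bullet_0$ with $C^\bullet_4$ and force $C^\bullet_4=1$, which is false here; the local geometry requires a different pairing (one fixing $C^\bullet_3$ and exchanging $C^\bullet_2$ with $C^\bullet_4$), and identifying the correct involution is precisely the nontrivial verification you set aside. As written, the proposal is a correct road map to the proof in \cite{ZaZi} rather than a proof.
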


In order to state the crepant resolution conjecture,
we require the following additional series in $x$.

\begin{align*}
    &X^{\bullet}:= \frac{D^{\bullet} C^{\bullet}_1}{C^{\bullet}_1} \,, \\
    &Y^{\bullet}:= \frac{D^{\bullet} C^{\bullet}_2}{C^{\bullet}_2} \,.
\end{align*}

For $K\PP^4$, define following power siries in $q$.
\begin{align}\label{B1}
    \nonumber&B_1=-5X \\
    &B_2=5^2(DX+{X}^2) \\
    \nonumber&B_3=-5^3(D^2X+3X(DX)+X^3) \\
    \nonumber&B_4=5^4(D^3X+4X(D^2X)+3(DX)^2+6{X}^2(DX)+{X}^4).
\end{align}
The following relations among $X,Y$ were proven in \cite[Section 3.2]{YY}.

\begin{align}\label{R1}
    B_4=&(1-L^5)(10B_3-35B_2+50B_1-24)  ,\\
  \nonumber  DY=&\frac{2}{5}(L^5-1)+2(L^5-1)X-2X^2-4DX\\
\nonumber    &+(L^5-1)Y-Y^2-2XY  .
\end{align}
In \eqref{B1} and \eqref{R1}, we omitted the upper subscript $K\PP^4$.

For $[\CC^5/\ZZ_5]$, define following power series in $\psi$.
\begin{align}\label{B2}
    \nonumber&B_1=\frac{1}{5}X \\
    &B_2=\frac{1}{5^2}(DX+X^2) \\
    \nonumber&B_3=\frac{1}{5^3}(D^2X+3X(DX)+X^3) \\
    \nonumber&B_4=\frac{1}{5^4}(D^3X+4X(D^2X)+3(DX)^2+6X^2(DX)+X^4).
\end{align}
Similarly we obtain the following relations.

\begin{align}\label{R2}
    B_4=&(1+\frac{L^5}{5^5})(2B_3-\frac{7}{5}B_2+\frac{2}{5}B_1-\frac{24}{625}) \, ,\\
\nonumber    DY=&-10(1+\frac{L^5}{5^5})+10(1+\frac{L^5}{5^5})X+5(1+\frac{L^5}{5^5})Y\\
\nonumber    &-2X^2-4DX-2XY-Y^2 \, .
\end{align}
In \eqref{B2} and \eqref{R2}, we omitted the upper subscript $[\CC^5/\ZZ_5]$. 

By the relations \eqref{R1} and \eqref{R2}, the differential ring
\begin{align*}
    \CC[{L^\bullet}^{\pm 1}][X^\bullet,\mathsf{D}^\bullet X^\bullet,{\mathsf{D}^\bullet}^2X^\bullet,\dots, Y^\bullet,\mathsf{D}^\bullet Y^\bullet,{\mathsf{D}^\bullet}^2Y^\bullet,\dots]
\end{align*}
is just the polynomial ring 
$$\CC[{L^\bullet}^{\pm 1}][X^\bullet,D^\bullet X^\bullet, {D^\bullet}^2 X^\bullet, Y^\bullet].$$
Denote this polynomail ring by
$$\mathds{F}^\bullet.$$

\

\section{Gromov-Witten invariants of local $\PP^4$}

\subsection{Formal Frobenious manifold.} Denote by $\mathsf{V}^{K\PP^4}$ the cohomology $H^*(\PP^4,\CC)$ of $\PP^4$. The genus $0$ $\mathsf{T}$-equivariant Gromov-Witten potential,
$$\mathsf{F}^{K\PP^4}_0(\gamma)=\sum_{d=0}^{\infty} t^d \sum_{r=0}^{\infty} \frac{1}{r!}\lan \gamma,\dots,\gamma\ran^{K\PP^4}_{0,d}\, , \, \gamma \in \mathsf{V}^{K\PP^4}$$
is a formal series in the ring $\mathsf{A}[[\mathsf{V}^*]]$ where
$$\mathsf{A}=\CC[[t]].$$
Note we do not have equivariant parameters since we use specialization \eqref{Specialization}.
The $\mathsf{T}$-equivariant genus $0$ potential $\mathsf{F}^{K\PP^4}_0$ defines a formal Frobenius manifold
$$(\mathsf{V}^{K\PP^4},\bullet,\eta)$$
at the origin of $\mathsf{V}^{K\PP^4}$.

\subsection{$\mathsf{R}$-matrix.}\label{RP2} For semisimple Gromov-Witten theoy with torus action, Givental proved reconstruction theorem using torus localization strategy, see \cite{SS,Book}. This method was applied to stable quotient theory of local $\PP^2$ in \cite{LP}.  The exactly same method in \cite{LP} yield the similar results for local $\PP^4.$ We here summarize the result for $(\mathsf{V}^{K\PP^4},\bullet,\eta)$.

Let 
\begin{align}\label{basis}
\{1,H,H^2,H^3,H^4\}\subset \mathsf{V}^{K\PP^4}
\end{align}
be a basis where $H$ is hyperplane class in $H^*(\PP^4)$.
Following the notation of \cite{KL}, 
we define series for the $K\PP^4$
geometry,


\begin{align*}
\langle \langle \gamma _1\psi  ^{a_1} , \ldots, \gamma _n\psi  ^{a_n} \rangle\rangle^{K\PP^4} _{0, n} 
= \sum _{d\geq 0} q^{d}
 \lan    \gamma _1\psi  ^{a_1} , \ldots, \gamma _n\psi  ^{a_n} \ran_{0, n, d}^{K\PP^4} \, ,
 \end{align*}
Define the series $\mathds{U}_i$ and $\mathds{S}_{i}$ by
\begin{align*}
    \mathds{U}^{K\PP^4}_i&=\lan\lan \phi_i,\phi_i\ran\ran^{K\PP^4}_{0,2}\,,\\
    \mathds{S}^{K\PP^4}_i(\gamma)&=e_i\lan\lan\frac{\phi_i}{z-\psi},\gamma\ran\ran^{K\PP^4}_{0,2}\,,
\end{align*}
where $e_i=-5\prod_{i=k}^4(1-\zeta^k)=-25$ is the $\mathsf{T}$-weight at the fixed point $p_i \in (K\PP^4)^{\mathsf{T}}$.

An evaluation of the series $\mathds{S}^{K\PP^4}_i(H^j)$ admits following asymtotic form
\begin{align*}
    \mathds{S}^{K\PP^4}_i(H^j)=e^{\frac{\mathds{U}^{K\PP^4}_i}{z}}\frac{(L^{K\PP^4})^j \zeta^{ij}}{C^{K\PP^4}_1\dots C^{K\PP^4}_j}\left( R_{j0}+R_{j1} \frac{z}{\zeta^i}+R_{j2}(\frac{z}{\zeta^i})^2+\dots \right)\,, \,\text{for},\ 0 \le j \le 4.
\end{align*}
The series $R_{jp}$ satisfy following system of equation.
\begin{align}\label{E1}
    \nonumber R_{1p+1}&=R_{0p+1}+\frac{D R_{0p}}{L}\, ,\\
    \nonumber R_{2p+1}&=R_{1p+1}+\left(\frac{DL}{L^2}-\frac{X}{L}\right)R_{1p}+\frac{D R_{1p}}{L}\, ,\\
    R_{3p+1}&=R_{2p+1}+\left(2\frac{DL}{L^2}-\frac{X}{L}-\frac{Y}{L}\right)R_{2p}+\frac{DR_{2p}}{L}\, ,\\
    \nonumber R_{4p+1}&=R_{3p+1}+\left(\frac{X}{L}+\frac{Y}{L}-2\frac{DL}{L^2}\right)R_{3p}+\frac{DR_{3p}}{L}\, ,\\
    \nonumber R_{0p+1}&=R_{4p+1}+\left(\frac{X}{L}-\frac{DL}{L^2}\right)R_{4p}+\frac{DR_{4p}}{L}\, .
\end{align}

\noindent Denote by $\tilde{\mathsf{R}}^{K\PP^4}(z)$ the matrix whose $(i,j)$-th component is $\sum_p R_{jp}(\frac{z}{\zeta^{i}})^p$. The $\mathsf{R}$-matrix $\mathsf{R}^{K\PP^4}(z)$ for $K\PP^4$ are given by
the following, see \cite{SS, LP}.

\begin{Prop}\label{RM1} We have

\begin{align*}
    \left[\mathsf{R}^{K\PP^4}(z)\right]_{ij}={\rm Exp}\left(-\sum_{k=1}^\infty \frac{N_{2k-1}}{2k-1}\frac{B_{2k}}{2k} (\frac{z}{\zeta^i})^{2k-1}\right)\left[\tilde{\mathsf{R}}^{K\PP^4}(z)\right]_{ij},
\end{align*}
where $N_{k}=(-\frac{1}{5})^k+\sum_{i=1}^4 (\frac{1}{1-\zeta^i})^k$.

\end{Prop}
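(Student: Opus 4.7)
\emph{Plan.} I would follow verbatim the strategy developed in \cite{LP} for the parallel case of local $\PP^2$; only minor adjustments are needed to pass from rank $3$ to rank $5$. The argument splits naturally into constructing the candidate matrix $\tilde{\mathsf{R}}^{K\PP^4}(z)$ from the asymptotic $\mathds{S}$-operator and then pinning down the remaining constant diagonal ambiguity.

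\emph{Construction of $\tilde{\mathsf{R}}^{K\PP^4}$.} The fundamental solution $\mathds{S}^{K\PP^4}$ is determined, via Givental's mirror theorem and Birkhoff factorization, from the hypergeometric series $I^{K\PP^4}(q,z)$. Plugging the stated asymptotic
\[
    \mathds{S}_{i}^{K\PP^4}(H^j)= e^{\mathds{U}_i^{K\PP^4}/z}\,\frac{(L^{K\PP^4})^j\zeta^{ij}}{C_1^{K\PP^4}\cdots C_j^{K\PP^4}}\Bigl(R_{j0}+R_{j1}\tfrac{z}{\zeta^i}+\cdots\Bigr)
\]
into the flatness equation \eqref{FSM} and using the Picard-Fuchs equation for $I^{K\PP^4}$ together with the relations $C_0=1$, $C_2=C_4$, $C_1^2 C_2^2 C_3 = L^5$, one extracts exactly the recursion \eqref{E1} for the coefficients $R_{jp}$. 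This defines the candidate matrix $\tilde{\mathsf{R}}^{K\PP^4}(z)$. The symplectic condition $\tilde{\mathsf{R}}\tilde{\mathsf{R}}^*(-z)=\mathsf{Id}$ follows, as in \cite{LP}, from the pairing relation between $\mathds{S}_{i}$ and $\mathds{S}_{i}^{-1}$ in the localized basis.

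\emph{Fixing the diagonal ambiguity.} By the uniqueness discussion of Section~3, any other symplectic solution of \eqref{FSM} differs from $\tilde{\mathsf{R}}^{K\PP^4}(z)$ by right multiplication by a constant diagonal matrix of the form $\exp(\sum_{k\ge 1}\mathsf{a}_{2k-1}z^{2k-1})$. To identify $\mathsf{a}_{2k-1}$ I would invoke the Coates-Givental/Tseng quantum Riemann-Roch theorem: the $\mathsf{T}$-equivariant theory of $K\PP^4$ is the twist of the $\mathsf{T}$-equivariant theory of $\PP^4$ by the class $e(-R\pi_*\mathsf{S}^5)$, and the twisting operator on the R-matrix side is the universal Bernoulli exponential. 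Evaluated at a fixed point $p_i\in(\PP^4)^{\mathsf{T}}$ and after the specialization $\lambda_i=\zeta^i$, the tangent weights $\{\lambda_i-\lambda_\ell\}_{\ell\neq i}$ together with the weight $-5\lambda_i$ of $\mathsf{S}^5|_{p_i}$ assemble into the power sum
\[
    N_{2k-1}=\bigl(-\tfrac{1}{5}\bigr)^{2k-1}+\sum_{\ell=1}^{4}\bigl(\tfrac{1}{1-\zeta^\ell}\bigr)^{2k-1},
\]
while the Bernoulli expansion of the Todd character contributes the factor $\tfrac{B_{2k}}{(2k)(2k-1)}$. Combining these two produces the prefactor in the statement.

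\emph{Main obstacle.} The delicate part is the bookkeeping in the second stage: one must track exactly which Bernoulli coefficients arise from the quantum Riemann-Roch twist, verify that the sum of tangent and obstruction contributions at the fixed point $p_i$ collapses to the single symmetric power sum $N_{2k-1}$ after the specialization $\lambda_i=\zeta^i$, and check the matching of signs. This calculation was carried out for $\PP^2$ in \cite{LP}; the rank-$5$ case treated here is structurally identical and requires no new ideas, only the patience to redo the computation with five tangent characters rather than three.
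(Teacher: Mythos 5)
Your proposal follows the same route the paper itself takes (and merely cites): Givental's localization reconstruction as in \cite{SS,Book}, implemented for local $\PP^2$ in \cite{LP} and transplanted to rank $5$, with the recursion \eqref{E1} producing $\tilde{\mathsf{R}}^{K\PP^4}$ from the asymptotics of the $\mathds{S}$-operator and the diagonal constant fixed by the Bernoulli/Stirling contributions of the five tangent weights $\lambda_i-\lambda_\ell$ and the twisting weight $-5\lambda_i$ at each fixed point, which after the specialization $\lambda_j=\zeta^j$ collapse to $N_{2k-1}(z/\zeta^i)^{2k-1}$ exactly as you compute. This matches the paper's argument, which gives no further detail beyond the citation, so your reconstruction is the intended proof.
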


\

\section{Gromov-Witten invariats of $[\CC^5/\ZZ_5]$}

\subsection{Fomal Frobenious manifold}
Denote by $\mathsf{V}^{[\CC^5/\ZZ_5]}$ the orbifold cohomology $H_{\text{orb}}^* (B\ZZ_5)$ of $B\ZZ_5$. For $\gamma \in \mathsf{V}^{[\CC^5/\ZZ^5]}$, the genus zero $\mathsf{T}$-equivariant Gromov-Witten potential,

\begin{align*}
    \mathsf{F}^{[\CC^5/\ZZ_5]}_0(\gamma)= \sum_{r=0}^\infty \sum_{m=0}^\infty \frac{1}{r!} \frac{1}{m!}\lan \gamma,\dots,\gamma,s \phi_1,\dots,s \phi_1 \ran^{[\CC^5/\ZZ_5]}_{0,r+m}
\end{align*}
is formal series in the ring $\mathsf{A}[[\mathsf{V}^*]]$ where
$$\mathsf{A}=\CC[[s]].$$
The $\mathsf{T}$-equivariant genus zero potential $\mathsf{F}^{[\CC^5/\ZZ_5]}_0$ define a formal Frobenius maifold
$$(\mathsf{V}^{[\CC^5/\ZZ_5]},\bullet,\eta)$$
at the origin of $\mathsf{V}^{[\CC^5/\ZZ_5]}$.

\subsection{$\mathsf{R}$-matirx} Here we summarize the result for $(\mathsf{V}^{[\CC^5/\ZZ_5]},\bullet,\eta)$. See apenxid for the proof.
\subsubsection{Frobenius structure}\label{FS}
We descibe the Frobenius structure on the orbifold cohomology  $H^*_{orb}([\CC^5/\ZZ_5])$ by following data.

\begin{itemize}
 \item{Metric:}
 In the basis $\{\phi_0,\phi_1,\phi_2,\phi_3,\phi_4 \}$, the metric $\eta$ is given by
 
 \[
   \mathbf{\eta}=\frac{1}{5}
  \left[ {\begin{array}{ccccc}
   1 & 0 & 0 & 0 & 0 \\
   0 & 0 & 0 & 0 & 1 \\
   0 & 0 & 0 & 1 & 0 \\
   0 & 0 & 1 & 0 & 0 \\
   0 & 1 & 0 & 0 & 0 \\
  \end{array} } \right]
\].

 \item{Quantum product:}
 \begin{align*}
     &\phi_0 \bullet \phi_0=\phi_0 &,&\,\,\,\,\,\,\,\,\, \phi_0 \bullet \phi_1=\phi_1\\
     &\phi_0 \bullet \phi_2=\phi_2 &,&\,\,\,\,\,\,\,\,\, \phi_0 \bullet \phi_3=\phi_3\\
     &\phi_0 \bullet \phi_4=\phi_4 &,&\,\,\,\,\,\,\,\,\, \phi_1 \bullet \phi_1= \frac{C_2}{C_1}\phi_2\\
     &\phi_1 \bullet \phi_2=-\frac{L^5}{C_1^3C_2^2} \phi_3 &,&\,\,\,\,\,\,\,\,\,  \phi_1 \bullet \phi_3=\frac{C_2}{C_1}\phi_4\\
     &\phi_1 \bullet \phi_4=\phi_0 &,&\,\,\,\,\,\,\,\,\, \phi_2 \bullet \phi_2=-\frac{L^5}{C_1^3C_2^2}\phi_4\\
     &\phi_2 \bullet \phi_3=\phi_0 &,&\,\,\,\,\,\,\,\,\, \phi_2 \bullet \phi_4=\frac{C_1}{C_2}\phi_1\\
     &\phi_3 \bullet \phi_3=-\frac{C_1^3C_2^2}{L^5}\phi_1 &,&\,\,\,\,\,\,\,\,\, \phi_3 \bullet \phi_4=-\frac{C_1^3C_2^2}{L^5}\phi_2\\
     &\phi_4 \bullet \phi_4=\frac{C_1}{C_2}\phi_3
 \end{align*}

\end{itemize}

\subsubsection{Canonical coordinate}

We normalize the basis $\{\phi_0,\phi_1,\dots,\phi_4\}$ by following.

\begin{align*}
    &\tilde{\phi}_0=\phi_0,\\
    &\tilde{\phi}_1=-\frac{C_1}{L}\phi_1,\\
    &\tilde{\phi}_2=\frac{C_1C_2}{L^2}\phi_2\\
    &\tilde{\phi}_3=\frac{L^2}{C_1C_2}\phi_3\\
    &\tilde{\phi}_4=-\frac{L}{C_1}\phi_4.
    \end{align*}


\noindent Then we have the following equations.

\begin{align*}
    \tilde{\phi}_i \bullet \tilde{\phi}_j = \tilde{\phi}_{i+j}.
\end{align*}
Here we use the convention $i+5=i$ for the subscript in the above equation.
\noindent The quantum product is semisimple and the canonical basis is given by the following.

\begin{align}\label{canbasis}
    e_\alpha=\frac{1}{5}\sum_i \zeta^{-\alpha i}\tilde{\phi}_i\, , \ \ \ \alpha=0,1,2,3,4,
\end{align}
where $\zeta = e^{\frac{2 \pi i}{5}}$. They satisfy the following equations.

\begin{align}\label{ss}
    e_{\alpha} \bullet e_{\beta}= \delta_{\alpha \beta}e_{\alpha}.
\end{align}
Define canonical coordinate $\{u^\alpha\}$ by following equation.

$$\sum_{\alpha=0}^4 e_\alpha d u^\alpha = \phi_1 ds \, ,$$
with initial conditions 

$$ u^\alpha|_{s=0}=0\,,\,\,\text{for} \, \alpha=0,1,\dots,4\,.$$
From \eqref{canbasis} and \eqref{ss} we easily obtain the following result.
\begin{Lemma}
 We have 
 \begin{align*}
     d u^\alpha = -\zeta^\alpha L \frac{d\psi}{\psi}.
 \end{align*}
\end{Lemma}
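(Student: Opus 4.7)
The plan is to expand $\phi_1\,ds$ in the canonical basis $\{e_\alpha\}$ to read off $du^\alpha$, and then convert $ds$ into $d\psi$ through the mirror map.

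First I would invert the change of basis \eqref{canbasis}. Since $\frac{1}{5}\sum_\alpha \zeta^{\alpha(i-j)}=\delta_{ij}$, the inverse of the discrete Fourier transform \eqref{canbasis} is $\tilde\phi_i=\sum_{\alpha=0}^4 \zeta^{\alpha i} e_\alpha$. Combined with the normalization $\tilde\phi_1=-\frac{C_1}{L}\phi_1$ from Section~\ref{FS}, this yields
$$\phi_1=-\frac{L}{C_1}\sum_{\alpha=0}^4 \zeta^\alpha e_\alpha.$$
Substituting into $\sum_\alpha e_\alpha\,du^\alpha=\phi_1\,ds$ and matching coefficients of $e_\alpha$ gives
$$du^\alpha=-\frac{L\zeta^\alpha}{C_1}\,ds.$$

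Next I would relate $ds$ and $d\psi$ via the mirror map. Unpacking the $I$-function, the $a=1$ summand contributes $\frac{\psi}{z}\phi_1$ while every other summand is $O(z^{-2})$; hence $I^{[\CC^5/\ZZ_5]}(\psi,z)=\phi_0+\frac{s(\psi)\phi_1}{z}+O(z^{-2})$ with $s(\psi)=\psi$. Applying $\mathsf{M}^{[\CC^5/\ZZ_5]}$ gives $\mathsf{F}_1=zD^{[\CC^5/\ZZ_5]} I=(D^{[\CC^5/\ZZ_5]} s)\phi_1+O(z^{-1})$, so the scalar coefficient of $\phi_1$ in $\mathsf{F}_1(\psi,\infty)$ equals $D^{[\CC^5/\ZZ_5]} s=\psi\, s'(\psi)$; that is, $C_1=\psi\, s'(\psi)$, equivalently $ds=\frac{C_1}{\psi}\,d\psi$.

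Combining the two steps,
$$du^\alpha=-\frac{L\zeta^\alpha}{C_1}\cdot\frac{C_1}{\psi}\,d\psi=-\zeta^\alpha L\,\frac{d\psi}{\psi},$$
as claimed. I don't foresee a significant obstacle here: once the convention that $C_i$ denotes the scalar coefficient of $\phi_i$ in $\mathsf{F}_i(\psi,\infty)$ (implicit in the relation $C_1^2C_2^2C_3=-L^5$) is fixed, each step is a direct manipulation of definitions, and the initial condition $u^\alpha|_{s=0}=0$ is automatic since it only fixes a constant of integration that does not appear in $du^\alpha$.
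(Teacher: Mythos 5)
Your derivation is correct and is essentially the computation the paper intends (the paper merely asserts the lemma follows from \eqref{canbasis} and \eqref{ss}): invert the discrete Fourier transform to get $\phi_1=-\frac{L}{C_1}\sum_\alpha\zeta^\alpha e_\alpha$, and convert $ds$ to $d\psi$ via $C_1=D^{[\CC^5/\ZZ_5]}s$. One inaccuracy: the side claim $s(\psi)=\psi$ is false, since the summands of $I^{[\CC^5/\ZZ_5]}$ with $a\equiv 1 \pmod 5$, $a>1$, also contribute to the $z^{-1}$ coefficient through the top-degree terms of $\prod(1-(kz)^5)$ (e.g.\ the $a=6$ term contributes $-\frac{\psi^6}{6!\,5^5}\frac{\phi_1}{z}$); fortunately this claim is never used, as the rest of your argument relies only on the relation $C_1=Ds$, which you derive correctly, so the conclusion stands.
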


\noindent Denote the normalized canonical basis by 

\begin{align*}
    \tilde{e}_\alpha= \frac{e_\alpha}{\sqrt{\eta(e_\alpha,e_\alpha)}}=5 e_\alpha.
\end{align*}

\subsubsection{Transition matrix}

The transition matrix $\mathsf{\Psi}$ from flat coordinate to normalized canonical basis is given by

$$\mathsf{\Psi}_{\alpha i}=\eta(\tilde{e}_\alpha,\phi_i). $$

\noindent From \eqref{canbasis}, we can calculate $\mathsf{\Psi}$ explicitly as follows.

\[
   \mathsf{\Psi}=\frac{1}{5}
  \left[ {\begin{array}{ccccc}
    1 & -\frac{L}{C_1}        & \frac{L^2}{C_1C_2} & \frac{C_1C_2}{L^2}& -\frac{C_1}{L} \\
    1 & -\zeta\frac{L}{C_1}        & \zeta^2\frac{L^2}{C_1C_2} & \zeta^3\frac{C_1C_2}{L^2}& -\zeta^4\frac{C_1}{L} \\
    1 & -\zeta^2\frac{L}{C_1}        & \zeta^4\frac{L^2}{C_1C_2} & \zeta\frac{C_1C_2}{L^2}& -\zeta^3\frac{C_1}{L} \\
    1 & -\zeta^3\frac{L}{C_1}        & \zeta\frac{L^2}{C_1C_2} & \zeta^4\frac{C_1C_2}{L^2}& -\zeta^2\frac{C_1}{L} \\
    1 & -\zeta^4\frac{L}{C_1}        & \zeta^3\frac{L^2}{C_1C_2} & \zeta^2\frac{C_1C_2}{L^2}& -\zeta\frac{C_1}{L} 
  \end{array} } \right].
\]

\noindent Denote by $R^{k}_{ij}$ the $(i,j)$-component of $\mathsf{R}^{-1}\mathsf{\Psi}$.
From the flatness equation \eqref{FSM} we obtain the following results.


\begin{align}\label{E2}
    \nonumber &D R^{k-1}_{i0}+L R^k_{i0}\zeta^i+C_1 R^k_{i1}=0 \,,\\
    \nonumber &D R^{k-1}_{i1}+L R^k_{i1}\zeta^i+C_2 R^k_{i2}=0 \,,\\
    &D R^{k-1}_{i2}+L R^k_{i2}\zeta^i-\frac{L^5}{C_1^2C_2^2} R^k_{i3}=0 \,,\\
    \nonumber &D R^{k-1}_{i3}+L R^k_{i3}\zeta^i+C_2 R^k_{i4}=0 \,,\\
    \nonumber &D R^{k-1}_{i4}+L R^k_{i4}\zeta^i+C_1 R^k_{i0}=0 \, .
\end{align}

\noindent The following normalizations are very useful to solve \eqref{E2}.
\begin{align}\label{normalization}
  \nonumber&R^k_{i0}= \tilde{R}^k_{i0}\zeta^{-ki}\,, \\
  \nonumber&R^k_{i1}=-\frac{L}{C_1} \tilde{R}^k_{i1} \zeta^{i-ki}\,, \\
  &R^k_{i2}=\frac{L^2}{C_1C_2} \tilde{R}^k_{i2} \zeta^{2i-ki}\,, \\
  \nonumber&R^k_{i3}=\frac{C_1C_2}{L^2} \tilde{R}^k_{i3} \zeta^{3i-ki}\,, \\
  \nonumber&R^k_{i4}=-\frac{C_1}{L} \tilde{R}^k_{i4} \zeta^{4i-ki} \,.
\end{align}

\noindent We solve the above system of equations with initial condition:
\begin{align*}
    \tilde{R}^k_{ij}(0)=0 .
\end{align*}

\noindent Denote by $\tilde{\mathsf{R}}^{[\CC^5/\ZZ_5]}$ the matrix whose $(i,j)$-th component is $$\sum_k \tilde{R}^k_{ij} (\frac{z}{\zeta^i})^k$$.

Applying orbifold quantum Riemann-Roch theorem (Theorem 4.2.1 in \cite{Ts}) to our case, we can determine the constant terms in $\mathsf{R}^{[\CC^5/\ZZ_5]}$.

\begin{Prop}\label{RM2} We have following formular for $\mathsf{R}$-matrix of $[\CC^5/\ZZ_5]$ after restriction $\psi=0$.
 \begin{align*}
     \left[\mathsf{R}^{[\CC^5/\ZZ_5]}\right]_{ij}|_{\psi=0}={\rm Exp}\left(5\sum_{k=1}^\infty (-1)^{k+1} \frac{B_{5k+1}(i/5)}{5k+1} \frac{z^{5k}}{5k(\zeta^j)^{5k}}\right),
 \end{align*}
where $B_k(x)$ is $k$-th Bernoulli polynomail. 
 
\end{Prop}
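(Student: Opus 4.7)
The plan is to identify the $\T$-equivariant Gromov-Witten theory of $[\CC^5/\ZZ_5]$ at $\psi=0$ with a twisted Gromov-Witten theory on $B\ZZ_5$, and then to apply Tseng's orbifold quantum Riemann-Roch theorem (\cite{Ts}, Theorem 4.2.1). Under the normalization \eqref{normalization}, the solution $\tilde{\mathsf{R}}^{[\CC^5/\ZZ_5]}$ of the flatness system \eqref{E2} with initial condition $\tilde{R}^{k}_{ij}(0)=0$ reduces to the identity at $\psi=0$, so the entire restriction $\mathsf{R}^{[\CC^5/\ZZ_5]}|_{\psi=0}$ equals the diagonal constant ambiguity left undetermined by \eqref{FSM}. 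This ambiguity must be fixed by matching to the actual Gromov-Witten theory.

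Near $\psi=0$ the $\T$-equivariant theory of $[\CC^5/\ZZ_5]$ coincides with the Gromov-Witten theory of $B\ZZ_5$ twisted by the $\T$-equivariant Euler class of the rank-$5$ orbifold bundle $\CC^5\to B\ZZ_5$, on which $\ZZ_5$ acts diagonally by the standard character and $\T=(\CC^*)^5$ acts with weights $\lambda_0,\ldots,\lambda_4$. Since the untwisted Gromov-Witten theory of $B\ZZ_5$ is a topological field theory (its $\mathsf{R}$-matrix is the identity in the normalized canonical basis), the restriction $\mathsf{R}^{[\CC^5/\ZZ_5]}|_{\psi=0}$ is read off directly from Tseng's twisting factor.

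For an equivariant Euler twist by a line bundle with weight $\lambda$ on the twisted sector of $B\ZZ_5$ labeled by $\omega^{i}$, Tseng's formula contributes an exponential of the form
\begin{align*}
\exp\!\left(\sum_{m\ge 1}\frac{B_{m+1}(i/5)}{m(m+1)}\,\frac{z^{m}}{(-\lambda)^{m}}\right).
\end{align*}
Taking the product over $\lambda_{l}=\zeta^{l}$ for $l=0,\ldots,4$, the character sum $\sum_{l=0}^{4}\zeta^{-lm}$ vanishes unless $5\mid m$ and otherwise equals $5$; only powers $z^{5k}$ survive, and one gains an overall factor of $5$. Substituting $m=5k$ in the surviving terms yields $B_{5k+1}(i/5)$ with prefactor $\tfrac{5}{5k(5k+1)}$, while the alternating sign $(-1)^{k+1}$ follows from $(-1)^{m}=(-1)^{k}$ for $m=5k$ together with the symmetry $B_{2n+1}(1-x)=-B_{2n+1}(x)$ (or equivalently the symplectic relation on $\mathsf{R}$). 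The denominator $\zeta^{j}$ attached to $z^{5k}$ comes from conjugating by the basis change \eqref{canbasis} from the twisted-sector basis $\{\phi_{i}\}$ to the normalized canonical basis $\{\tilde e_{\alpha}\}$, which is a Fourier transform on the character group of $\ZZ_{5}$. Assembling these three ingredients reproduces the formula of the proposition.

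The main obstacle is matching the indexing conventions of Tseng's theorem, which is stated intrinsically on the inertia stack $I[\CC^5/\ZZ_5]$ in terms of ages of twisted sectors, with the semisimple basis in which the $\mathsf{R}$-matrix of our Frobenius structure is written. In particular, one must verify that after specialization $\lambda_{l}=\zeta^{l}$ the exponential factor indexed by the twisted sector $i$ transports to the $(i,j)$-entry with the correct power of $\zeta^{-j}$, which is where the explicit appearance of $\zeta^{j}$ in the denominator of $z^{5k}$ originates. Once this bookkeeping is set up, the remaining algebra is a direct verification using character orthogonality on $\ZZ_5$.
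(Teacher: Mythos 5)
Your proposal is correct and takes essentially the same route as the paper, whose entire justification for Proposition \ref{RM2} is the one-line citation of Tseng's Theorem 4.2.1: your elaboration (viewing the theory at $\psi=0$ as the equivariant Euler twist of the TQFT of $B\ZZ_5$, whose own $\mathsf{R}$-matrix is trivial, and using the character sum $\sum_{l=0}^{4}\zeta^{-lm}=5\,\delta_{5\mid m}$ to extract the surviving $z^{5k}$ terms with coefficient $5(-1)^{k+1}B_{5k+1}(i/5)/(5k(5k+1))$) is exactly how that application is carried out. The only bookkeeping you worry about unnecessarily is the factor $(\zeta^{j})^{5k}$ in the denominator, which is identically $1$ because $\zeta^{5}=1$, so no transport of $\zeta^{-j}$ powers actually needs to be verified.
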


\noindent By Theorem \ref{Classification}, we obtain the following result.

\begin{Cor}\label{RM3}The $\mathsf{R}$-matrix for $[\CC^5/\ZZ_5]$ equal to following form.
\begin{align*}
    \mathsf{R}^{[\CC^5/\ZZ_5]}={\rm Exp}\left(5\sum_{k=1}^\infty (-1)^{k+1} \frac{B_{5k+1}(i/5)}{5k+1} \frac{z^{5k}}{5k(\zeta^j)^{5k}}\right) \tilde{\mathsf{R}}^{[\CC^5/\ZZ_5]}.
\end{align*}
\end{Cor}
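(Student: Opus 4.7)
The plan is to deduce Corollary \ref{RM3} from Proposition \ref{RM2} via the Givental--Teleman uniqueness (Theorem \ref{Classification}) together with the flatness equation \eqref{FSM}. By construction the auxiliary matrix $\tilde{\mathsf{R}}^{[\CC^5/\ZZ_5]}$ is a particular solution of the system \eqref{E2} (equivalent to \eqref{FSM}) singled out by the initial condition $\tilde R^k_{ij}(0)=0$ for $k\ge 1$, while the true $\mathsf{R}^{[\CC^5/\ZZ_5]}$ produced by Theorem \ref{Classification} is another solution. Since the solution space of \eqref{FSM} is a torsor over the group of constant diagonal exponentials $\exp(\sum_{k}\mathsf{a}_{2k-1}z^{2k-1})$ with $\mathsf{a}_{2k-1}$ diagonal, the two solutions must agree up to a single $\psi$-independent diagonal factor $D(z)$.

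To identify $D(z)$ I would restrict to the orbifold point $\psi=0$. At that point $L(0)=0$ and the $C_i(0)$ are all $1$ by direct inspection of the $I$-function $I^{[\CC^5/\ZZ_5]}$, so the prefactors in \eqref{normalization} collapse and the initial conditions $\tilde R^k_{ij}(0)=0$ for $k\ge 1$ force $\tilde{\mathsf{R}}^{[\CC^5/\ZZ_5]}|_{\psi=0}$ to reduce to the identity, leaving only its trivial $k=0$ piece. Comparing with Proposition \ref{RM2}, which determines $\mathsf{R}^{[\CC^5/\ZZ_5]}|_{\psi=0}$ explicitly via the orbifold quantum Riemann--Roch theorem of Tseng, one reads off $D(z)$ as precisely the Bernoulli-polynomial exponential appearing in the statement. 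Because $D(z)$ is $\psi$-independent, this identification propagates from $\psi=0$ to all $\psi$, yielding Corollary \ref{RM3}.

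The main obstacle is of bookkeeping nature. One has to verify carefully that the $\psi\to 0$ limit of the normalizations \eqref{normalization} is well-posed despite the singular ratios involving $L/C_1$ and $C_1C_2/L^2$, and that the extracted constant factor indeed sits in the admissible subgroup of diagonal odd-power exponentials---equivalently, that it respects the symplectic constraint $\mathsf{R}(z)\mathsf{R}^{*}(-z)=\mathrm{Id}$ demanded by Theorem \ref{Classification}. The latter hinges on the observation $(\zeta^j)^{5k}=1$, which collapses the apparent $j$-dependence of the Bernoulli-exponent to a pure $i$-dependence, together with an elementary parity check of $B_{5k+1}(i/5)$; both verifications are routine once the correct normalization is in place, but they are the genuine content of the step.
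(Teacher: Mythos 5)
Your argument is essentially the paper's: the corollary is derived there in one line from Theorem \ref{Classification} (uniqueness of the symplectic $\mathsf{R}$-matrix solving the flatness equation up to a constant diagonal exponential factor) combined with Proposition \ref{RM2}, which pins down $\mathsf{R}^{[\CC^5/\ZZ_5]}|_{\psi=0}$, and with the normalization $\tilde R^k_{ij}(0)=0$ making $\tilde{\mathsf{R}}^{[\CC^5/\ZZ_5]}$ trivial at the orbifold point --- exactly the route you propose. One small correction to your bookkeeping: at $\psi=0$ one has $L(0)=0$ and $C_1(0)=0$ (e.g.\ $C_1=\psi+O(\psi^6)$), not $C_i(0)=1$, but the ratios $L/C_1$, $L^2/(C_1C_2)$, etc.\ in \eqref{normalization} tend to finite nonzero constants, so the identification of the constant factor at $\psi=0$ goes through as you describe.
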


\

\section{Crepant resolution conjecture}

We state here the crepant resolution conjecture for all genera and prove the conjecture for genus $2,3$.

By the relations \eqref{R1} and \eqref{R2}, we have 
\begin{align}\label{R3}
    \mathsf{R}^{K\PP^4} \in \text{Mat}_{5 \ti5}(\mathds{F}^{K\PP^4}[[z]]) ,\\
    \nonumber \mathsf{R}^{[\CC^5/\ZZ_5]} \in \text{Mat}_{5 \ti 5}(\mathds{F}^{[\CC^5/\ZZ_5]}[[z]]) .
\end{align}
Define transformation $\mathds{T}$ from $\mathds{F}^{K\PP^4}$ to $\mathds{F}^{[\CC^5/\ZZ_5]}$ by
\begin{align*}
    &\mathds{T}(L)=-\frac{L}{5}\\
    &\mathds{T}(X)=-\frac{X}{5}\\
    &\mathds{T}(DX)=\frac{DX}{5^2}\\
    &\mathds{T}(D^2X)=-\frac{D^2X}{5^3}\\
    &\mathds{T}(Y)=-\frac{Y}{5}.
\end{align*}
\noindent In above we omitted the obvious upper subscript.

\begin{Conj}\label{CRC}
The transformation $\mathds{T}$ satisfies the following. 
\begin{align*}\mathds{T}(\mathsf{R}^{K\PP^4})=\mathsf{R}^{[\CC^5/\ZZ_5]}.
\end{align*}
\end{Conj}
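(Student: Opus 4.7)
The strategy is to exploit the fact that both R-matrices are uniquely characterized by a flatness recursion together with an ambiguity (right multiplication by a diagonal symplectic exponential) that is pinned down by the explicit Bernoulli formulas in Propositions \ref{RM1} and \ref{RM2}. I would match the two pieces of each R-matrix separately: first the ``recursive'' factor $\tilde{\mathsf{R}}$, then the scalar exponential prefactor.

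\textbf{Step 1 (checking $\mathds{T}$ is well-defined).} Before comparing the R-matrices, I would verify that $\mathds{T}$ extends to a ring homomorphism $\mathds{F}^{K\PP^4}\to\mathds{F}^{[\CC^5/\ZZ_5]}$. A direct computation from the definitions of $B_i^\bullet$ in \eqref{B1} and \eqref{B2} gives $\mathds{T}(B_i^{K\PP^4})=5^i B_i^{[\CC^5/\ZZ_5]}$ for $i=1,\dots,4$; then the first relation of \eqref{R1}, after applying $\mathds{T}$ and dividing by $5^4$, becomes exactly the first relation of \eqref{R2}, and the $Y$-relation is checked in the same way.

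\textbf{Step 2 (matching $\tilde{\mathsf{R}}$).} Substituting the normalization \eqref{normalization} into the $[\CC^5/\ZZ_5]$ system \eqref{E2} yields a recursion for $\tilde R^k_{ij}$ that is structurally identical to the $K\PP^4$ system \eqref{E1} (with $L, X, Y$ now denoting the $[\CC^5/\ZZ_5]$ series). Because $\mathds{T}$ carries the $K\PP^4$-versions of $L, X, Y$ to the corresponding $[\CC^5/\ZZ_5]$-versions (up to the signs and powers of $5$ fixed in Step~1), applying $\mathds{T}$ termwise to \eqref{E1} reproduces the normalized $[\CC^5/\ZZ_5]$ recursion; matching the initial data (identity at leading order in $z$) then gives $\mathds{T}(\tilde{\mathsf{R}}^{K\PP^4})=\tilde{\mathsf{R}}^{[\CC^5/\ZZ_5]}$.

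\textbf{Step 3 (matching the scalar prefactors).} The exponential prefactors in Proposition \ref{RM1} and Corollary \ref{RM3} involve only constants (no generators of $\mathds{F}^\bullet$), so $\mathds{T}$ acts trivially on them and we must verify a literal identity of the form
\begin{align*}
-\sum_{k\ge 1}\frac{N_{2k-1}}{2k-1}\,\frac{B_{2k}}{2k}\left(\frac{z}{\zeta^i}\right)^{2k-1}
\;=\;
5\sum_{k\ge 1}(-1)^{k+1}\frac{B_{5k+1}(i/5)}{5k+1}\,\frac{z^{5k}}{5k(\zeta^j)^{5k}}
\end{align*}
(interpreted in a common completion once the diagonal-exponential ambiguity is pinned down). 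The tool is the classical multiplication formula $\sum_{a=0}^{n-1} B_m(x+a/n) = n^{1-m}B_m(nx)$, together with a partial-fraction expansion of $\sum_{j=1}^{4}(1-\zeta^j)^{-k}$ expressing $N_{2k-1}$ in terms of Bernoulli polynomial values at fractions $a/5$.

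\textbf{Main obstacle.} The hard part is Step~3. The two series display different sets of $z$-powers ($z^{2k-1}$ versus $z^{5k}$), so no naive termwise identification is possible; the discrepancy must be absorbed by the constant diagonal right-multiplication ambiguity $\exp(\sum a_{2k-1}z^{2k-1})$ acting on the $\tilde{\mathsf{R}}$ factor. Thus Steps~2 and~3 cannot be entirely decoupled: one has to track how this ambiguity interacts with \eqref{normalization} on the orbifold side and then verify, at each odd order in $z$, a nontrivial Bernoulli identity between the $B_{2k}$-type sums (with weights $N_{2k-1}$) on the $K\PP^4$ side and the sums of $B_{5k+1}(i/5)$ on the orbifold side. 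I expect the bulk of the technical work to lie in formulating and checking these order-by-order identities, most plausibly by introducing a generating-function presentation of both exponents and reducing the comparison to the multiplication theorem for Bernoulli polynomials.
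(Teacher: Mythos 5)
First, a framing point: the statement you are proving is stated in the paper as a \emph{conjecture}, and the paper does not prove it in general. What the paper does (Proposition \ref{Prop1}) is reduce Conjecture \ref{CRC} to an explicit family of identities, and then verify those identities only up to order $z^6$, which yields the genus $2,3$ cases. Your Steps 1 and 2 reproduce exactly the paper's reduction: the observation that $\mathds{T}$ intertwines the two differential systems \eqref{E1} and \eqref{E2} is the entire content of the paper's proof of Proposition \ref{Prop1}, and your Step 3 is precisely the residual identity that the paper states but does not prove. So your proposal is the right reduction, but, like the paper, it does not constitute a proof of the conjecture; the ``main obstacle'' you identify is genuinely open in the paper as well.

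There is also a concrete overclaim in Step 2. From the equivalence of the two systems under $\mathds{T}$ one may only conclude that $\mathds{T}(\mathsf{R}^{K\PP^4})$ and $\mathsf{R}^{[\CC^5/\ZZ_5]}$ differ by right multiplication by a constant matrix $\sum_i \mathsf{A}_i z^i$, \emph{not} that $\mathds{T}(\tilde{\mathsf{R}}^{K\PP^4})=\tilde{\mathsf{R}}^{[\CC^5/\ZZ_5]}$. The initial data cannot be ``matched'' the way you suggest: the $K\PP^4$ series are normalized by their behavior at $q=0$ (where $L^{K\PP^4}=1$), while the orbifold series are normalized at $\psi=0$ (where $L^{[\CC^5/\ZZ_5]}=0$), and $\mathds{T}$ is a homomorphism of abstract polynomial rings sending $L\mapsto -L/5$, so it does not carry one expansion point to the other. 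This is exactly why the paper introduces the specialization $M(C)=C^+|_{L=0,X=-1/5,\dots}$ and the constants $a^i_k=M(R_{ik})$: the identity to be proven is
\begin{align*}
{\rm Exp}\left(-\sum_{k\ge1}\frac{N_{2k-1}}{2k-1}\frac{B_{2k}}{2k}z^{2k-1}\right)\sum_{k\ge0}a^i_k z^k
={\rm Exp}\left(5\sum_{k\ge1}(-1)^{k+1}\frac{B_{5k+1}(i/5)}{5k+1}\frac{z^{5k}}{5k}\right),
\end{align*}
and the factor $\sum_k a^i_k z^k$, which is absent from the display in your Step 3, is nontrivial (it is where all the hypergeometric data $R_{ik}$ enters, via the explicit solutions listed in the appendix). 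You do acknowledge in your closing paragraph that the constant ambiguity must absorb the mismatch of $z$-powers, which is the correct instinct, but no proof of the resulting Bernoulli-type identities is supplied either by you or by the paper; your suggested tools (the multiplication theorem for Bernoulli polynomials and partial fractions for $N_{2k-1}=(-\tfrac15)^{2k-1}+\sum_{j=1}^4(1-\zeta^j)^{-(2k-1)}$) are plausible but would still have to contend with the $a^i_k$, which are not elementary constants.
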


Denote by $F^\bullet_g$ the genus $g$ series for the corresponding theories,

$$F^\bullet_g=\lan\lan \ran\ran^\bullet_{g,0}\,, \,\, \text{for}\, \bullet=K\PP^4 \,\text{or}\, [\CC^5/\ZZ_5]\,.$$  From Theorem \ref{Classification} and \eqref{R3} we also have the following results.
\begin{align*}
    F_g^{K\PP^4}\in \mathds{F}^{K\PP^4},\,\,\,\,
    F_g^{[\CC^5/\ZZ_5]} \in \mathds{F}^{[\CC^5/\ZZ_5]}.
\end{align*}
By Theorem \ref{Classification}, Conjecture \ref{CRC} is equivalent to the following. 

\begin{Conj}\label{Conj1}
 For $g \ge 2$, the transformation $\mathds{T}$ satisfies the following.
\begin{align*}
\mathds{T}(F_g^{K\PP^4})=F_g^{[\CC^5/\ZZ_5]}.
\end{align*}

\end{Conj}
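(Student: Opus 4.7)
The strategy is to reduce Conjecture \ref{Conj1} to the R-matrix identity of Conjecture \ref{CRC}, via the Givental-Teleman reconstruction theorem (Theorem \ref{Classification}). For $\bullet = K\PP^4$ or $[\CC^5/\ZZ_5]$, the series $F^\bullet_g = \lann \rann^\bullet_{g,0}$ is given by a sum over stable graphs $\Gamma$ of genus $g$ with no legs, where each vertex contributes the topological field theory $\omega^\bullet$, each edge the bi-vector $(\eta^{-1} - R(\psi')\eta^{-1}R(\psi'')^T)/(\psi'+\psi'')$, and the $\psi$-integrals are evaluated on $\overline{M}_{g(v),n(v)}$. Since $\psi$-monomials of degree greater than $3g(v)-3+n(v)$ vanish, only finitely many coefficients of $\mathsf{R}^\bullet(z)$ enter; for $g=2,3$ with no external legs, this is a manageable finite list. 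If $\mathds{T}(\mathsf{R}^{K\PP^4}) \equiv \mathsf{R}^{[\CC^5/\ZZ_5]}$ modulo the relevant truncation in $z$, and if $\mathds{T}$ carries $\omega^{K\PP^4}$ to $\omega^{[\CC^5/\ZZ_5]}$ (which can be read off the Frobenius structure data in Section \ref{FS}), then applying $\mathds{T}$ to the graph sum for $F_g^{K\PP^4}$ recovers the graph sum for $F_g^{[\CC^5/\ZZ_5]}$.

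Thus the core task is verifying Conjecture \ref{CRC}. I would split each R-matrix into its scalar Bernoulli prefactor (given explicitly by Proposition \ref{RM1} and Corollary \ref{RM3}) times a ``recursive'' factor $\tilde{\mathsf{R}}^\bullet$. Since $\mathds{T}$ acts only on $L, X, DX, D^2 X, Y$ and fixes the base field $\CC$, it acts trivially on both Bernoulli prefactors. Hence one needs (a) to check that the two prefactors coincide as entries of $5 \times 5$ matrices of scalar power series in $z$, and (b) to verify $\mathds{T}(\tilde{\mathsf{R}}^{K\PP^4}) = \tilde{\mathsf{R}}^{[\CC^5/\ZZ_5]}$.

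For (b), I would compare the ODE systems \eqref{E1} and \eqref{E2} term by term. Using $\mathds{T}(L) = -L/5$, $\mathds{T}(X) = -X/5$, $\mathds{T}(DX) = DX/5^2$, $\mathds{T}(Y)=-Y/5$, together with $\mathds{T}(D) = -D$ (implicit from the relation between $q\p_q$ and $\psi\p_\psi$ via the mirror map, noting $q = -\psi^5/5^5$), each recursion in \eqref{E1} should transform, after absorbing the normalization \eqref{normalization} into the entries $\tilde{R}^k_{ij}$, into the corresponding recursion in \eqref{E2}. The initial condition $\tilde R^k_{ij}(0)=0$ matches on both sides, and the leading term $\tilde{\mathsf{R}}|_{k=0} = \mathsf{Id}$ is preserved. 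For (a), the required identity
\[
-\sum_{k\ge 1} \frac{N_{2k-1}}{2k-1}\frac{B_{2k}}{2k}\Big(\frac{z}{\zeta^i}\Big)^{2k-1} \;=\; 5\sum_{k\ge 1} (-1)^{k+1} \frac{B_{5k+1}(i/5)}{5k+1}\frac{z^{5k}}{5k(\zeta^j)^{5k}}
\]
(suitably reindexed between the two matrix normalizations) will follow from the Raabe distribution formula $\sum_{a=0}^{n-1} B_m(x + a/n) = n^{1-m} B_m(nx)$ applied at $n=5$, together with the evaluation of $N_{2k-1}$ in terms of $\zeta$-power sums.

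The main obstacle will be the bookkeeping in step (a), where the odd-power Bernoulli series for $K\PP^4$ must be reconciled with the $z^{5k}$-graded Bernoulli polynomial series for $[\CC^5/\ZZ_5]$; the matching holds only after one accounts for the index shifts caused by the $\zeta^i$ and $\zeta^j$ normalizations used in the definitions of $\tilde{\mathsf{R}}^{K\PP^4}$ and $\tilde{\mathsf{R}}^{[\CC^5/\ZZ_5]}$. Once this identity is established, the conclusion for $g=2,3$ follows mechanically from the graph-sum expansion, since one only needs to handle the (finite) list of stable graphs of genus $2$ and $3$ with no external markings and confirm that the polynomial ring identification $\mathds{T}:\mathds{F}^{K\PP^4}\to\mathds{F}^{[\CC^5/\ZZ_5]}$ is compatible with each vertex and edge contribution under the Givental-Teleman formula.
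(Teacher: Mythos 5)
Your overall architecture is the same as the paper's: reduce Conjecture \ref{Conj1} for $g=2,3$ to a finite-order truncation of the $\mathsf{R}$-matrix identity of Conjecture \ref{CRC} via the Givental--Teleman graph sum, and attack Conjecture \ref{CRC} by observing that $\mathds{T}$ carries the system \eqref{E1} into the system \eqref{E2}. However, your central step contains a genuine gap. You factor the required identity into (a) equality of the two Bernoulli prefactors and (b) $\mathds{T}(\tilde{\mathsf{R}}^{K\PP^4})=\tilde{\mathsf{R}}^{[\CC^5/\ZZ_5]}$, and neither holds separately. The $K\PP^4$ prefactor of Proposition \ref{RM1} is supported in \emph{all} odd powers of $z$ and its $z^1$-coefficient is $-\tfrac{N_1}{1}\tfrac{B_2}{2}\zeta^{-i}=-\tfrac{3}{20}\zeta^{-i}\neq 0$ (since $N_1=-\tfrac15+\sum_{i=1}^4(1-\zeta^i)^{-1}=\tfrac95$), whereas the orbifold prefactor of Proposition \ref{RM2} is supported in powers of $z^5$; no reindexing or Raabe-type distribution identity can equate a series with a nonvanishing $z^1$ term to one without. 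Likewise (b) fails because the two recursive factors are pinned down by integration constants at \emph{different} points: $R_{jp}$ for $K\PP^4$ is normalized by localization at $q=0$ (i.e.\ $L=1$), while $\tilde R^k_{ij}$ for $[\CC^5/\ZZ_5]$ is normalized by $\tilde R^k_{ij}(0)=0$ at $\psi=0$ (i.e.\ $L=0$). Your assertion that ``the initial condition matches on both sides'' is precisely where the entire content of the conjecture sits: matching the ODEs only shows $\mathds{T}(\mathsf{R}^{K\PP^4})=(\sum_i\mathsf{A}_iz^i)\,\mathsf{R}^{[\CC^5/\ZZ_5]}$ for some undetermined constant matrices $\mathsf{A}_i$.

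The paper closes this by evaluating both sides at the common point $L=0$, which brings in the constants $a^i_k=M(R_{ik})$ recording the value of the $K\PP^4$ solution at the orbifold point; Proposition \ref{Prop1} states that Conjecture \ref{CRC} is equivalent to
${\rm Exp}\bigl(-\sum_{k\ge1}\tfrac{N_{2k-1}}{2k-1}\tfrac{B_{2k}}{2k}z^{2k-1}\bigr)\sum_k a^i_kz^k$ equaling the orbifold Bernoulli exponential, and it is exactly the factor $\sum_k a^i_kz^k$ (e.g.\ $a^i_1=\tfrac{3}{20}$, cancelling the $z^1$ term above) that you have dropped. This identity is \emph{not} proved in closed form in the paper --- it is only verified through order $z^6$, which is what suffices for the finitely many $\psi$-monomials appearing in the genus $2$ and $3$ graph sums; if your Raabe argument worked it would prove the full conjecture for all $g$. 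To repair your proof you must (i) restore the $a^i_k$ and verify the corrected identity of Proposition \ref{Prop1} explicitly through order $z^6$ using the listed $R_{1p}$ and the recursion \eqref{E1}, and (ii) fix the minor inconsistency that $\mathds{T}(D)=-D$ contradicts $\mathds{T}(X)=-X/5$, $\mathds{T}(DX)=DX/5^2$, which force $D\mapsto -D/5$ on $\mathds{F}^{K\PP^4}$.
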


For an element $C\in \mathds{F}^{K\PP^4}$, let $C^+$ be the non-negative degree part with respect to $L$ in $C$. Define a function 
$$M : \mathds{F}^{K\PP^4} \rightarrow \QQ$$
by

$$M(C):= C^+|_{L=0,X=-\frac{1}{5},Y=-\frac{1}{5},DX=0,D^2X=0}\,.$$
Define $a^i_k \in \QQ$ by
\begin{align*}
    a^i_k:= M(R_{ik})\,,
\end{align*}
where $R_{ik} \in \mathds{F}^{K\PP^4}$ was defined in Section 4.2.

\begin{Prop}\label{Prop1} Conjecture \ref{CRC} is equivalent to following equations for $0 \le i \le 4$.
\begin{align*}
    {\rm Exp}\left(-\sum_{k=1}^\infty \frac{N_{2k-1}}{2k-1}\frac{B_{2k}}{2k} z^{2k-1}\right)\sum_{k=0}^\infty a^i_k z^k = {\rm Exp}\left(5\sum_{k=1}^\infty(-1)^{k+1} \frac{B_{5k+1}(i/5)}{5k+1}\frac{z^{5k}}{5k}\right)
\end{align*}

\end{Prop}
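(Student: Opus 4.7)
The plan is to use the explicit factorizations in Proposition \ref{RM1} and Corollary \ref{RM3} to reduce the matrix identity of Conjecture \ref{CRC} to an entrywise comparison, and then to specialize at the cusp using the uniqueness clause of the Givental--Teleman classification (Theorem \ref{Classification}).

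First, I would write
$$[\mathsf{R}^{K\PP^4}]_{ij} = E_i^{K\PP^4}(z)\,[\tilde{\mathsf{R}}^{K\PP^4}]_{ij}, \qquad [\mathsf{R}^{[\CC^5/\ZZ_5]}]_{ij} = E_{ij}^{[\CC^5/\ZZ_5]}(z)\,[\tilde{\mathsf{R}}^{[\CC^5/\ZZ_5]}]_{ij},$$
where $E^{K\PP^4}_i(z)$ and $E^{[\CC^5/\ZZ_5]}_{ij}(z)$ denote the Bernoulli exponential prefactors of Proposition \ref{RM1} and Corollary \ref{RM3}. These lie in $\QQ[\zeta][[z]]$ and are fixed by $\mathds{T}$, so Conjecture \ref{CRC} becomes, entrywise, $E^{K\PP^4}_i(z)\,\mathds{T}([\tilde{\mathsf{R}}^{K\PP^4}]_{ij}) = E^{[\CC^5/\ZZ_5]}_{ij}(z)\,[\tilde{\mathsf{R}}^{[\CC^5/\ZZ_5]}]_{ij}$.

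Second, by Theorem \ref{Classification}, the matrices $\tilde{\mathsf{R}}^\bullet$ are determined by the recursions \eqref{E1} and \eqref{E2} (combined with the normalization \eqref{normalization}) and by their initial values at any single point. A direct termwise check shows that substituting $\mathds{T}(L) = -L/5$, $\mathds{T}(X) = -X/5$, $\mathds{T}(DX) = DX/5^2$, $\mathds{T}(D^2X)=-D^2X/5^3$, $\mathds{T}(Y)=-Y/5$ into \eqref{E1} reproduces \eqref{E2} line by line (with the normalization \eqref{normalization} accounting for the different role played by $L$ and $C_i$). Hence $\mathds{T}([\tilde{\mathsf{R}}^{K\PP^4}]_{ij})$ and $[\tilde{\mathsf{R}}^{[\CC^5/\ZZ_5]}]_{ij}$ satisfy the same recursion, and it suffices to verify the entrywise identity at one specialization point. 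I would choose the cusp $L = 0$, which on the orbifold side corresponds to $\psi = 0$: Proposition \ref{RM2} combined with Corollary \ref{RM3} forces $[\tilde{\mathsf{R}}^{[\CC^5/\ZZ_5]}]_{ij}|_{\psi=0} = 1$, while on the $K\PP^4$ side the operator $M$ implements the matching specialization, yielding $M([\tilde{\mathsf{R}}^{K\PP^4}]_{0, i}) = \sum_{k \ge 0} a^i_k z^k$ directly from $a^i_k := M(R_{ik})$. Evaluated on the $(0, i)$ entries (for which $\zeta^0 = 1$ eliminates the $\zeta$-dependence in both prefactors), this produces exactly the proposition's identity for each $i \in \{0, 1, 2, 3, 4\}$, and the $\ZZ_5$-Galois symmetry $\zeta \mapsto \zeta^\ell$ of the matrices $\tilde{\mathsf{R}}^\bullet$ propagates the equality to all remaining entries, confirming the matrix statement of CRC.

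The main obstacle is the second step: verifying termwise that $\mathds{T}$ intertwines \eqref{E1} and \eqref{E2} through the normalization \eqref{normalization}---an elementary but tedious computation in the differential rings $\mathds{F}^\bullet$, which requires careful bookkeeping of the signs and powers of $5$ hidden in the definition of $\mathds{T}$. A secondary delicate point is matching the specialization map $M$ on the $K\PP^4$ side with $\psi = 0$ on the orbifold side; this uses the relations \eqref{R1} and \eqref{R2} evaluated at the respective cusps to confirm that $L = 0, X = Y = -1/5, DX = D^2X = 0$ transforms under $\mathds{T}$ to the $\psi = 0$ values on the orbifold side.
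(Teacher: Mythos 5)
Your proposal follows essentially the same route as the paper: check that $\mathds{T}$ intertwines the recursions \eqref{E1} and \eqref{E2}, conclude that $\mathds{T}(\mathsf{R}^{K\PP^4})$ and $\mathsf{R}^{[\CC^5/\ZZ_5]}$ differ by a constant matrix series in $z$, and then pin that constant down by evaluating at $L=0$ (i.e.\ $\psi=0$) via Propositions \ref{RM1} and \ref{RM2}. The one discrepancy is your final entry selection: in Corollary \ref{RM3} the Bernoulli-polynomial argument $i/5$ is governed by the \emph{row} index while $(\zeta^j)^{5k}=1$ kills the column dependence, so the $(0,i)$ entries would all yield $B_{5k+1}(0)$ on the right-hand side; the proposition's $i$-th identity instead arises from the $i$-th row (equivalently the $(i,i)$ comparison after the harmless substitution $z\mapsto \zeta^i z$, which leaves the right-hand side unchanged since $\zeta^{5ki}=1$).
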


\begin{proof}
 It is easy to check that under the transformation $\mathds{T}$ the system of equations \eqref{E1} is equivalent to \eqref{E2}. Hence the solutions of two systems of equations differ by constants. In other words \begin{align}\label{RE}
 \mathds{T}(\mathsf{R}^{K\PP^4})=(\sum_{i=0}^\infty \mathsf{A}_i z^i)\mathsf{R}^{[\CC^5/\ZZ_5]},
 \end{align}
 for some constant matirice $\mathsf{A}_i$. Here we consider each coefficient of $z$ in \eqref{RE} as an element in $\mathds{F}^{[\CC^5/\ZZ_5]}$.  From Proposition \ref{RM1} and Proposition \ref{RM2} we obtain the equations in the proposition by restricting \eqref{RE} to $L=0$.  

\end{proof}

We checked the equation in Proposition \ref{Prop1} up to degree of $z$ equal to 6. We give some computational details in Appendix 7.2. Therefore we obtain following result.

\begin{Thm}
Conjecture \ref{Conj1} is true for genus $2,3$.
\end{Thm}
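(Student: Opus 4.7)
The plan is to use Theorem \ref{Classification} to reduce Conjecture \ref{Conj1} for $g=2,3$ to a finite-order identity of $\mathsf{R}$-matrices, and then to invoke Proposition \ref{Prop1} to further reduce this to a finite computation involving the explicit Bernoulli generating series.

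First, by Theorem \ref{Classification}, each potential $F_g^\bullet$ can be written as Givental's graph sum over stable genus-$g$ graphs with no legs, in which $\mathsf{R}^\bullet(z)$ appears only through the edge contribution
\begin{equation*}
\frac{\eta^{-1}-\mathsf{R}(\psi'_e)\eta^{-1}\mathsf{R}(\psi''_e)^T}{\psi'_e+\psi''_e}\,.
\end{equation*}
Since a $\psi$-monomial of degree exceeding $3g_v-3+n_v$ integrates to zero on $\overline{M}_{g_v,n_v}$, only finitely many $z$-coefficients of $\mathsf{R}^\bullet$ contribute to $F_g^\bullet$. A brief enumeration of the relevant stable graphs of total genus $2$ or $3$ with no legs shows that one only needs $\mathsf{R}^\bullet(z) \bmod z^7$ (the tightest case being the graph with a single genus-$2$ vertex carrying one self-loop, whose moduli space has dimension $5$). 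The topological parts of the two CohFTs agree under $\mathds{T}$ by the Frobenius data of Section \ref{FS}, a direct check from the quantum products.

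Second, as observed in the proof of Proposition \ref{Prop1}, the recursions \eqref{E1} and \eqref{E2} are conjugate under $\mathds{T}$, so $\mathds{T}(\mathsf{R}^{K\PP^4})$ and $\mathsf{R}^{[\CC^5/\ZZ_5]}$ satisfy the same $\mathds{F}^{[\CC^5/\ZZ_5]}$-valued recursion. Consequently they differ only by a multiplicative constant matrix factor, and agree modulo $z^{N+1}$ as soon as the single-variable specialized identity of Proposition \ref{Prop1} holds modulo $z^{N+1}$ for each $i \in \{0,1,2,3,4\}$. Thus checking $\mathds{T}(\mathsf{R}^{K\PP^4})\equiv\mathsf{R}^{[\CC^5/\ZZ_5]} \pmod{z^7}$ collapses to comparing coefficients of $z^0,z^1,\ldots,z^6$ in the two explicit series of Proposition \ref{Prop1}. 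One executes this verification directly: solve \eqref{E1} for $R_{jp}$ up to $p=6$, apply the specialization $M$ to read off $a^i_k$ for $k \le 6$, and match the resulting rational coefficients against the closed-form expression involving $B_{2k}$ and $B_{5k+1}(i/5)$. The computational tabulation is carried out in Appendix 7.2.

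The main obstacle is not conceptual but bookkeeping: one must run the recursion \eqref{E1} accurately to sufficient order, apply the specializations $L=0$, $X=Y=-\tfrac{1}{5}$, $DX=D^2X=0$ consistently to extract $a^i_k$, and verify cancellation against the exact Bernoulli-polynomial values. One must also double-check that the genus-$2,3$ enumeration of stable graphs really does not require any coefficient of $z^k$ for $k \ge 7$. Once this tabulation is complete, Conjecture \ref{Conj1} for $g=2,3$ follows immediately from the reduction above.
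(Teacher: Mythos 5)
Your proposal is correct and follows essentially the same route as the paper: reduce Conjecture \ref{Conj1} for $g=2,3$ via Theorem \ref{Classification} and Proposition \ref{Prop1} to the explicit Bernoulli identity, and verify it through the coefficient of $z^6$ (the paper states exactly this check, with the tabulation of $R_{1p}$ for $p\le 6$ in Appendix 7.2). You merely make explicit the dimension count on $\overline{M}_{g,n}$ showing that $\mathsf{R}^\bullet(z)\bmod z^7$ suffices for genus $2$ and $3$, which the paper leaves implicit.
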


\

\section{Appendix}

\subsection{Qauntum product of $[\CC^5/\ZZ_5]$} To compute the quantum product of $[\CC^5/\ZZ_5]$, we need to know the genus $0$ three point correlator.

\begin{Lemma} The genus $0$ three point correlators are as follows.
\begin{align*}
    &\lan\lan \phi_0,\phi_0,\phi_0\ran\ran^{[\CC^5/\ZZ_5]}_{0,3} = \frac{1}{5} \,\,,\,\, &\lan\lan \phi_0,\phi_1,\phi_4\ran\ran^{[\CC^5/\ZZ_5]}_{0,3} = \frac{1}{5} \,\, , \,\, \\
    &\lan\lan \phi_0,\phi_2,\phi_3\ran\ran^{[\CC^5/\ZZ_5]}_{0,3} = \frac{1}{5} \,\,,\,\,
    &\lan\lan \phi_1,\phi_1,\phi_3\ran\ran^{[\CC^5/\ZZ_5]}_{0,3} = \frac{1}{5} \frac{C_2}{C_1} \,\, ,\\ &\lan\lan \phi_1,\phi_2,\phi_2\ran\ran^{[\CC^5/\ZZ_5]}_{0,3} = -\frac{1}{5}\frac{L^5}{C_1^3 C_2^2}\,\,,\,\,
    &\lan\lan \phi_2,\phi_4,\phi_4\ran\ran^{[\CC^5/\ZZ_5]}_{0,3} = \frac{1}{5} \frac{C_1}{C_2} \,\, ,\\ 
    &\lan\lan \phi_3,\phi_3,\phi_4\ran\ran^{[\CC^5/\ZZ_5]}_{0,3} = -\frac{1}{5}\frac{C_1^3 C_2^2}{L^5}.
\end{align*}

For other choices of insertions, the genus 0 three point correlators are equal to zero.
\end{Lemma}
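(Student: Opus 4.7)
The plan is to determine these three-point correlators by extracting the small quantum product from the Givental $I$-function $I^{[\CC^5/\ZZ_5]}(\psi,z)$, and then to pair through the metric $\eta$.

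\textbf{Correlators containing $\phi_0$.} Since $\phi_0 = \mathsf{1}$ is the unit of the orbifold quantum cohomology, property~(iii) in the CohFT axioms of Section~1.1 applied on the small phase space immediately yields
\[
\lan\lan \phi_0, \phi_a, \phi_b \ran\ran^{[\CC^5/\ZZ_5]}_{0,3} = \eta(\phi_a, \phi_b) = \tfrac{1}{5}\delta_{a+b \equiv 0 \bmod 5},
\]
which is constant in $s$ and reproduces the first three entries of the table.

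\textbf{Non-trivial correlators.} For the remaining four I would invoke the orbifold mirror theorem of Coates--Corti--Iritani--Tseng for the toric orbifold $[\CC^5/\ZZ_5]$: the function $I^{[\CC^5/\ZZ_5]}$ lies on Givental's Lagrangian cone, and coincides with the small $J$-function after the mirror transformation $s = s(\psi)$. The iterated Birkhoff factorizations $\mathsf{F}_a = \mathsf{M}^a I^{[\CC^5/\ZZ_5]}$ of Section~3 assemble into a fundamental solution $\mathds{S}^{[\CC^5/\ZZ_5]}$ of the quantum differential equation $z\,\partial_s \mathds{S} = (\phi_1 \bullet)\mathds{S}$, and the leading coefficients $C_a = \mathsf{F}_a|_{z=\infty}$ become the structure constants of $\phi_1$-multiplication up to the basis normalization.

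\textbf{Reading off the table.} In the normalized basis $\tilde{\phi}_a$ of Section~5.2.2 the product is cyclic, $\tilde{\phi}_a \bullet \tilde{\phi}_b = \tilde{\phi}_{a+b}$, because each $\mathsf{M}$-step advances the index by one while dividing by the leading coefficient $C_a$. Un-normalizing via
\[
\tilde{\phi}_1 = -\tfrac{C_1}{L}\phi_1,\quad \tilde{\phi}_2 = \tfrac{C_1 C_2}{L^2}\phi_2,\quad \tilde{\phi}_3 = \tfrac{L^2}{C_1 C_2}\phi_3,\quad \tilde{\phi}_4 = -\tfrac{L}{C_1}\phi_4,
\]
together with the relations $C_0=1$, $C_2=C_4$, $C_1^2 C_2^2 C_3 = -L^5$ of Proposition~3.1, reproduces the quantum product table of Section~5.2.1. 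Each three-point correlator is then recovered from $\lan\lan \phi_a, \phi_b, \phi_c \ran\ran = \eta(\phi_a \bullet \phi_b, \phi_c)$; for instance $\phi_1 \bullet \phi_2 = -\tfrac{L^5}{C_1^3 C_2^2}\phi_3$ pairs with $\phi_2$ to give $-\tfrac{1}{5}\tfrac{L^5}{C_1^3 C_2^2}$, matching the stated value. All correlators not listed vanish by the orbifold monodromy selection rule $a_1 + a_2 + a_3 \equiv 0 \pmod 5$, which rules out every other triple.

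\textbf{Main obstacle.} The main technical point is making precise the link between the scalars $C_a$ produced by the Birkhoff iteration $\mathsf{M}^a I^{[\CC^5/\ZZ_5]}$ and the structure constants of the small quantum product in the normalized basis $\tilde{\phi}_a$. This is a standard consequence of the Lagrangian-cone formalism once the orbifold mirror theorem is in hand, but the asymmetric-looking denominators like $-L^5/(C_1^3 C_2^2)$ demand careful bookkeeping of the scaling factors in the passage from $\tilde{\phi}_a$ back to $\phi_a$.
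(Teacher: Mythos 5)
Your proposal is correct and follows essentially the same route as the paper: both extract the $\phi_1$-multiplication from the Birkhoff factorization of $I^{[\CC^5/\ZZ_5]}$ (via the CCIT Lagrangian-cone result and the $\mathds{S}$-operator identity), then use $\eta(X\bullet Y,Z)=\lan\lan X,Y,Z\ran\ran_{0,3}$ together with associativity to fill out the remaining entries. The only cosmetic difference is that you dispatch the $\phi_0$-correlators by the unit axiom and make the degree/monodromy selection rule explicit, whereas the paper reads $\lan\lan\phi_0,\phi_1,\phi_4\ran\ran$ directly off the $I$-function expansion and leaves the vanishing implicit.
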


\begin{proof}
It was proven in \cite{CCIT} that $I^{[\CC^5/\ZZ_5]}$ is on the Lagrangian cone $\mathcal{L}^{[\CC^5/\ZZ_5]}$ encoding the genus $0$ Gromov-Witten theory of $[\CC^5/\ZZ_5]$, see \cite{CG,SF} for the definition of the Lagrangian cone. By the now standard properties of the Lagrangian cone $\mathcal{L}^{[\CC^5/\ZZ_5]}$, one can show the following, see for example \cite{KL,RR}. 
\begin{align}\label{BF}
\mathds{S}^{[\CC^5/\ZZ_5]}(s,z)(\phi_k) = \frac{(\mathsf{M}^{[\CC^5/\ZZ^5]})^k(I^{[\CC^5/\ZZ_5]}(\psi,z))}{C^{[\CC^5/\ZZ_5]}_k}\,,\,\,\,\, \text{for}\,\,k=0,1,2,3,4 , 
\end{align}
where the $\mathds{S}$-operator for $[\CC^5/\ZZ_5]$ is defined as usual by 

\begin{align*}
\mathds{S}^{[\CC^5/\ZZ_5]}(s,z)(\gamma) = \sum_i \phi^i \lan\lan \frac{\phi_i}{z-\psi},\gamma  \ran\ran^{[\CC^5/\ZZ_5]}_{0,2} \,\, , \,\, \text{for}\, \gamma \in H^*_{\text{orb}} ([\CC^5/\ZZ_5]).
\end{align*}
Observe that $I$-function has following expansion,

$$I^{[\CC^5/\ZZ_5]}=\phi_0+\frac{I_1 \phi_1}{z}+\frac{I_2 \phi_2}{z^2}+\frac{I_3 \phi_3}{z^3}+\frac{I_4 \phi_4}{z^4} +\mathsf{O}(\frac{1}{z^5})\,.$$
Then equation \eqref{BF} immediately yield

\begin{align*}
\lan\lan \phi_0,\phi_1,\phi_4\ran\ran^{[\CC^5/\ZZ_5]}_{0,3} &= \frac{1}{5} \, , \\ 
\lan\lan \phi_1,\phi_1,\phi_3\ran\ran^{[\CC^5/\ZZ_5]}_{0,3} &= \frac{1}{5} \frac{C_2}{C_1} \,, \\
\lan\lan \phi_1,\phi_2,\phi_2\ran\ran^{[\CC^5/\ZZ_5]}_{0,3} &= -\frac{1}{5}\frac{L^5}{C_1^3 C_2^2}\,. 
\end{align*}
Recall the property of Frobenius manifold,
\begin{align}\label{P4}
\eta(X\bullet Y,Z)=\lan\lan X,Y,Z \ran\ran^{[\CC^5/\ZZ_5]}_{0,3} \,\, , \, \text{for}\, X,Y,Z \in H^*_{\text{orb}} ([\CC^5/\ZZ_5])\,.
\end{align}
Then the other results in the Lemma follows from associativity of quantum product.

\end{proof}

Combining the result of above Lemma with \eqref{P4}, one can compute the quantum product of $[\CC^5/\ZZ_5]$ as in Section \ref{FS}.

\subsection{Some computational results for local $\PP^4$.} We here summarize the method of \cite{ZaZi} to solve the Picard-Fuchs equation of local $\PP^4$. Throughout this subsection, we omitt the upper subscript $K\PP^4$ for $L,D$  which were defeined in Section \ref{BHS}.

Define differential operators $\mathcal{L}_k$ by

\begin{align*}
    &\mathcal{L}_1 := (1-L^5) + 5 D \,, \\
    &\mathcal{L}_2 := \frac{1}{5}(4-L^5-3L^{10})+(6-6L^5)D+10D^2\,,\\
    &\mathcal{L}_3 := \frac{1}{25}(12-7L^5-2L^{10}-3L^{15})+\frac{1}{5}(22-13L^5-9L^{10})D+(12-12L^5)D^2+10D^3\,,\\
    &\mathcal{L}_4:=\frac{1}{625}(120-103L^5+61L^{10}-144L^{15}+66L^{20})\\&+\frac{1}{25}(50-41L^5-3L^{10}-6L^{15})D+(7-\frac{29}{5}L^5-\frac{6}{5}L^{10})D^2+(10-10L^5)D^3+5D^4\,,\\
    &\mathcal{L}_5:=-\frac{24}{625}(L^5-1)-\frac{274}{625}(L^5-1)D-\frac{9}{5}(L^5-1)D^2-\frac{17}{5}(L^5-1)D^3\\
    &+(3-3L^5)D^4+D^5 \,.
\end{align*}
The following proposition is proved in \cite{ZaZi}. Since the setting in \cite{ZaZi} is slightly different from ours, we need some normalizations.

\begin{Prop}
The series 
$$Q_p:=L R_{1p}$$ satisfies following recursive differential equations.

\begin{align*}
    \mathcal{L}_1(Q_p)+\frac{1}{L}\mathcal{L}_2(Q_{p-1})+\frac{1}{L^2}\mathcal{L}_3(Q_{p-2})+\frac{1}{L^3}\mathcal{L}_4(Q_{p-3})+\frac{1}{L^4}\mathcal{L}_5(R_{p-4})=0\,.
\end{align*}
\end{Prop}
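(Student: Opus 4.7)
The plan is to derive the stated recursion from the fifth-order Picard-Fuchs equation satisfied by $I^{K\PP^4}(q,z)$, after translating the conventions of [ZaZi] to those of the present paper.

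First, recall that $I^{K\PP^4}$ is annihilated by a fifth-order hypergeometric differential operator of the schematic form
$$(zD)^5\, I^{K\PP^4} \;=\; (-5H)(-5H-z)(-5H-2z)(-5H-3z)(-5H-4z)\, q\cdot I^{K\PP^4},$$
where $D = D^{K\PP^4} = q\partial_q$ and $H$ is the hyperplane class with $H^5 = 0$ in $H^*(\PP^4)$. Expanding in the basis $\{1,H,H^2,H^3,H^4\}$ of \eqref{basis} produces a closed fifth-order ODE in $D$ on the scalar components of $I^{K\PP^4}$.

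Next, apply the Birkhoff factorization operator $\mathsf{F}_j^{K\PP^4} = \mathsf{M}^j\, I^{K\PP^4}$ iteratively, and use the asymptotic expansion of $\mathds{S}_i^{K\PP^4}(H)$ from Section \ref{RP2} to identify $Q_p = L R_{1p}$ with the $z^p$-coefficient of the normalized series $\mathsf{F}_1^{K\PP^4}/C_1^{K\PP^4}$. Substituting into the Picard-Fuchs equation and collecting the coefficient of $z^p$ produces a linear relation in $Q_p, Q_{p-1}, Q_{p-2}, Q_{p-3}$, together with one further term at index $p-4$. Dividing through by the powers of $L$ that the successive Birkhoff normalizations introduce converts this into the recursion of the statement, and the coefficients of $z^p, z^{p-1},\ldots, z^{p-4}$ in the PF operator match precisely the differential operators $\mathcal{L}_1,\ldots,\mathcal{L}_5$. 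The explicit closed forms for each $\mathcal{L}_k$ are then verified by a finite computation using the defining ODE of the mirror-map coordinate, $DL = \tfrac{1}{5}(L - L^6)$, together with the hypergeometric coefficients appearing in $I^{K\PP^4}$.

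The main obstacle is the bookkeeping required to reconcile the conventions of [ZaZi], which are set up for the quintic Calabi-Yau threefold in $\PP^4$, with the local geometry $K\PP^4$ used here. In particular, the asymmetry in the last term of the recursion (with $R_{p-4}$ in place of $Q_{p-4}$) reflects one factor of $L$ absorbed into $\mathcal{L}_5$ by the normalization change, and the $(-5H)$ factors in the PF operator introduce sign discrepancies that must be tracked throughout. Once the dictionary between the two sets of normalizations is established, and the explicit $L^5$-polynomial coefficients in $\mathcal{L}_1,\ldots,\mathcal{L}_5$ are verified to match those in [ZaZi] after this translation, the proposition follows by direct substitution into the recursion stated there.
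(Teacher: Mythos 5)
The paper offers no proof of this proposition beyond citing [ZaZi] and remarking that the normalizations there must be adjusted, and your sketch reconstructs exactly that derivation --- expand the Picard--Fuchs equation for $I^{K\PP^4}$ in powers of $z$ after Birkhoff factorization, identify $Q_p=LR_{1p}$ with the asymptotic coefficients of $\mathsf{F}_1^{K\PP^4}/C_1^{K\PP^4}$, and match the resulting operators with $\mathcal{L}_1,\dots,\mathcal{L}_5$ --- so the approach is essentially the same as the paper's. One small correction: since $L=(1+5^5q)^{-1/5}$, the mirror-map ODE is $DL=\tfrac{1}{5}(L^6-L)$ rather than $\tfrac{1}{5}(L-L^6)$; this sign does not affect the structure of your argument, though like the paper you leave the actual verification of the explicit $L^5$-polynomial coefficients of the $\mathcal{L}_k$ to an unperformed finite computation.
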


\noindent Here $R_{ip}$ is the $q$-power series defined in Section \ref{RP2}.
The first few solutions with initial conditions 
\begin{align*}
&Q_0=L\,,\\
&Q_p|_{q=0}=0\,\,\,\,\text{for}\,\,p \ge 1 \,,
\end{align*}
can be calculated as follows.

\begin{align*}
    &R_{10}:=1\,,\\
    &R_{11}=\frac{3}{20}(1-L^4)\,,\\
    &R_{12}=\frac{9}{800}(1-L^4)^2\,,\\
    &R_{13}=\frac{1}{80000}(269+4288L^2-135L^4-16128L^7+135L^8+11571L^{12})\,,\\
    &R_{14}=\frac{1}{6400000}(2823+137216L+51456L^2-3228L^4-2041088L^6\\
    &-193536L^7+810L^8+4322304L^{11}+138852L^{12}-2415609L^{16})\,,\\
    &R_{15}=\frac{3}{128000000}(50532+137216L+25728L^2-2823L^4-4634624L^5\\
    &-2041088L^6-96768L^7+1614L^8+23404672L^{10}+4322304L^{11}\\
    &+69426L^{12}-34732544L^{15}-2415609L^{16}+15911973L^{20})\,,\\
    &R_{16}=\frac{1}{25600000000}(4564757+6174720L+4613888L^2+4493426178L^4\\
    &-417116160L^5-91848960L^6-17353728 L^7+127035L^8-47045380096 L^9\\
    &+2106420480L^{10}+194503680L^{11}+12450396L^{12}+132709674240L^{14}\\
    &-3125928960L^{15}-108702405L^{16}-143147676672L^{19}+1432077570L^{20}+52989974037L^{24})
\end{align*}
Using \eqref{E1}, we can also calculate the other series 
$$R_{ip}\,\,\,\,\, \text{for}\,\,i=0,2,3,4.$$

\end{document}